\numberwithin{equation}{section}
\newtheorem{theorem}[equation]{Theorem}
\newtheorem{lemma}[equation]{Lemma}
\newtheorem{corollary}[equation]{Corollary}
\theoremstyle{definition}
\newtheorem{definition}[equation]{Definition}
\newtheorem{remark}[equation]{Remark}
\DeclareMathOperator{\sym}{ \sigma\!\!\!\sigma}
\def\A{\mathscr A}
\def\B{\mathscr B}
\def\C{\mathbb C}
\def\Z{\mathbb Z}
\def\L{\mathscr L}
\def\N{\mathbb N}
\def\R{\mathbb R}
\def\P{\mathscr P}
\def\S{\mathscr S}
\def\U{\mathscr U}
\def\Mbar{\overline{M}}
\def\sing{\textup{sing}}
\def\Dom{\mathcal D}
\def\eps{\varepsilon}
\def\st{;\;}
\DeclareMathOperator{\Diff}{Diff}
\DeclareMathOperator{\supp}{supp}
\DeclareMathOperator{\ord}{ord}
\DeclareMathOperator{\Hom}{Hom}
\DeclareMathOperator{\End}{End}
\DeclareMathOperator{\Tr}{Tr}
\DeclareMathOperator{\tr}{tr}
\begin{document}
\title[Resolvent expansion for elliptic boundary contact 
problems]{On the expansion of the resolvent for elliptic 
boundary contact problems}
\author{Thomas Krainer}
\address{Penn State Altoona \\ 3000 Ivyside Park\\ Altoona, PA 16601\\ U.S.A.}
\email{krainer@psu.edu}

\begin{abstract}
Let $A$ be an elliptic operator on a compact manifold with boundary
$\Mbar$, and let $\wp : \partial\Mbar \to Y$ be a covering map,
where $Y$ is a closed manifold.
Let $A_C$ be a realization of $A$ subject to a coupling condition
$C$ that is elliptic with parameter in the sector $\Lambda$.
By a coupling condition we mean a nonlocal boundary condition
that respects the covering structure of the boundary.

We prove that the resolvent trace $\Tr_{L^2} (A_C-\lambda)^{-N}$
for $N$ sufficiently large has a complete asymptotic expansion
as $|\lambda| \to \infty$, $\lambda \in \Lambda$. In particular,
the heat trace $\Tr_{L^2}e^{-tA_C}$ has a complete asymptotic 
expansion as $t \to 0^+$, and the $\zeta$-function
has a meromorphic extension to $\C$.
\end{abstract}

\subjclass[2000]{Primary: 58J32; Secondary: 58J35, 35J40, 35P05}
\keywords{Boundary and transmission problems, transfer and
contact problems, heat equation method, ${\mathbb Z}/k$-manifolds,
quantum graphs}

\maketitle


\section{Introduction}
\label{sec-Introduction}

\noindent
This paper deals with the pursuit of Seeley's program
\cite{SeeleyComplex,SeeleyResBVP} for elliptic operators
on singular spaces that are given by a compact smooth manifold
$\Mbar$ with boundary together with a prescribed gluing rule that 
identifies finitely many boundary points with each other. The 
spaces  under consideration include, in particular, 
quantum graphs (\cite{KostrykinSchrader,Kuchment}) and
${\mathbb Z}/k$-manifolds (\cite{FreedMelrose,Rosenberg}).

More precisely, following \cite{SavinSternin}, we assume
that the boundary of $\Mbar$ is equipped with a covering
$\wp : \partial\Mbar \to Y$. The base manifold $Y$ is closed, 
and we do not assume that it is connected. In particular,
$\wp$ may have a different number of sheets over each connected
component of $Y$. The singular space $M_{\sing}$ is
obtained by collapsing the fibres $\wp^{-1}\{y\}$ to $y$ for
every $y \in Y$.

A quantum graph represents a one-dimensional example of such a 
space: $\Mbar$ is a disjoint union of intervals --- the edges 
of the graph --- and $Y$ is the set of vertices. For $y \in Y$, 
the set $\wp^{-1}\{y\}$ consists of those endpoints of edges 
that are joined to form the vertex $y$.

Another example for the situation under study is
a disjoint union $\Mbar = \overline{N}_1 \cup \overline{N}_2$
of smooth compact manifolds $\overline{N}_j$ with 
the same (or diffeomorphic) boundary $Y = \partial\overline{N}_j$, 
$j=1,2$. We get a $2$-sheeted covering $\wp : 
\partial\Mbar \to Y$, and by collapsing the points in 
$\wp^{-1}\{y\}$ to $y$ for every $y \in Y$ the manifolds 
$\overline{N}_j$ are glued along their common boundary to give a 
closed manifold.
This is the setup for surgery in spectral theory, and $\Mbar$
is a particular example for a ${\mathbb Z}/2$-manifold.
The resulting space $M_{\sing}$ is nonsingular in this situation.

The elliptic operators to be considered on $\Mbar$ are subject to 
boundary conditions that respect the coupling of boundary points 
given by $\wp$. We will refer to these conditions as coupling
conditions in the sequel (they are called nonlocal boundary
value problems in \cite{SavinSternin}). It makes sense to
think of the realization of an elliptic operator subject to
a coupling condition as a boundary contact problem.
Examples are operators of Laplace-type with Kirchhoff or
$\delta$-type conditions on a quantum graph (see
\cite{KostrykinSchrader,Kuchment}), and, in the case of a 
${\mathbb Z}/2$-manifold, operators with transmission or transfer 
conditions as discussed in the mathematical physics literature,
see also \cite{GilkeyAsymptotic}.
One of the motivations for the present work is to contribute to
the theoretical underpinning of the heat equation method for
these and related problems, specifically as regards the treatment
of general elliptic operators of arbitrary order.

Let $E \to \Mbar$ be a vector bundle, and let
$A \in \Diff^m(\Mbar,E)$, $m > 0$, be a differential operator
with coefficients in $\End(E)$ (all operators and structures in 
this work are assumed to be smooth on $\Mbar$). Fix a Riemannian 
metric on $\Mbar$ and a Hermitian metric on $E$.
Our main result is the following theorem.

\begin{theorem}\label{ResolventExpansion-Intro}
Let $C$ be a coupling condition for $A$, and assume that the
boundary contact problem $(A,C)$ is elliptic with parameter in the 
closed sector $\Lambda \subset \C$ (see Section~\ref{sec-Setup} 
for details). Then the following holds:
\begin{enumerate}[a)]
\item The operator $A_C = A$ with domain
$$
\Dom(A_C) = \{u \in H^{\ord(A)}(\Mbar,E) \st Cu = 0\}
$$
is a closed operator in $L^2 = L^2(\Mbar,E)$.
\item For $\lambda \in \Lambda$ with $|\lambda| > 0$ sufficiently 
large the resolvent $(A_C - \lambda)^{-1} : L^2 \to \Dom(A_C)$ 
exists and satisfies the norm estimate
$$
\|(A_C - \lambda)^{-1}\|_{\L(L^2)} = O(|\lambda|^{-1})
$$
as $|\lambda| \to \infty$.
\item For $N > \dim\Mbar/\ord(A)$ the operator
$(A_C - \lambda)^{-N} : L^2 \to L^2$ is trace class, and for
any $\varphi \in C^{\infty}(\Mbar,\End(E))$ we have an asymptotic 
expansion \begin{equation}\label{Resolventtraceasymp}
\Tr \bigl(\varphi(A_C - \lambda)^{-N}\bigr)  \sim
|\lambda|^{-N}\sum_{j=0}^{\infty}
c_j(\hat{\lambda})|\lambda|^{\frac{\dim\Mbar-j}{\ord(A)}}
\quad \textup{as $|\lambda| \to \infty$},
\end{equation}
where $c_j = c_j(\varphi,N,A,C) \in
C^{\infty}({\mathbb S}^1\cap\Lambda)$, and
$\hat{\lambda} = \lambda/|\lambda|$.
\end{enumerate}
\end{theorem}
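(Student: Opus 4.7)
The plan is to construct a parameter-dependent right parametrix for $A_C-\lambda$ in an extension of the Boutet de Monvel calculus that accommodates the covering $\wp$. First I would build an interior parametrix $P(\lambda)$ using the standard parameter-dependent pseudodifferential calculus on a closed extension of $\Mbar$: parameter-ellipticity of $A$ on $\Lambda$ yields $(A-\lambda)P(\lambda) = \id + T(\lambda)$ with $T(\lambda)$ smoothing in the interior and the obstruction concentrated at $\partial\Mbar$ as boundary trace data.

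Next I would reduce to the boundary and incorporate the coupling condition. Applying $C$ to $P(\lambda)$ and to Poisson-type operators associated with $A$ produces a parameter-dependent operator on $\partial\Mbar$; since $C$ is $\wp$-compatible, this operator can be organized as an elliptic pseudodifferential operator over $Y$ with values in an appropriate bundle built from $E|_{\partial\Mbar}$. Parameter-ellipticity of the contact problem $(A,C)$ is precisely the invertibility of this reduced boundary operator for large $|\lambda|$; inverting it and feeding the solution back through Poisson operators yields a correction $Q(\lambda)$ so that $R(\lambda) = P(\lambda) + Q(\lambda)$ maps $L^2$ into $\Dom(A_C)$ with $(A_C-\lambda)R(\lambda) = \id + S(\lambda)$ and $\|S(\lambda)\|_{\L(L^2)} = O(|\lambda|^{-\infty})$. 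A Neumann series then promotes $R(\lambda)$ to a genuine inverse for $|\lambda|$ large, giving parts (a) and (b); the $O(|\lambda|^{-1})$ bound in (b) follows from the interior symbol estimate on $P(\lambda)$ together with the corresponding bound for $Q(\lambda)$.

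For part (c) I would iterate the parametrix to approximate $(A_C-\lambda)^{-N}$ up to a smoothing error with rapid $|\lambda|$-decay, multiply by $\varphi$, and take the $L^2$-trace. The interior portion produces the classical Seeley-type expansion by integrating homogeneous components of the symbol of $\varphi P(\lambda)^N$ along the fibres of $T^*\open{\Mbar}$, with coefficients that are smooth on ${\mathbb S}^1 \cap \Lambda$. The boundary and coupling contributions from $Q(\lambda)$ and its compositions decompose, via the fibre structure of $\wp$, into a finite sum indexed by the local sheet-exchange relations of the covering; each summand admits the standard boundary asymptotic evaluation, and together they contribute additional homogeneous terms to the series \eqref{Resolventtraceasymp}.

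The principal obstacle is developing the parameter-dependent operator calculus itself, so that $\wp$-coupled boundary operators can be manipulated symbolically on a par with classical boundary operators. Unlike the Boutet de Monvel situation, the Poisson and Green components arising from $C$ have Schwartz kernels concentrated on the fibre product $\{(x,x')\in\partial\Mbar\times\partial\Mbar : \wp(x)=\wp(x')\}$, which is a disjoint union of graphs of local diffeomorphisms rather than the boundary diagonal alone. Establishing mapping properties, composition rules, symbolic order estimates, and kernel asymptotics in this extended calculus — and verifying that the parametrix construction and trace evaluation above close up cleanly against it — is where the bulk of the technical work lies.
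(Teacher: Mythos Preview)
Your proposal is correct in outline and follows the same broad strategy as the paper: build an extended parameter-dependent Boutet de Monvel calculus, construct a parametrix, invert modulo rapidly decreasing remainders via a Neumann series, and expand the trace term by term. The organizational difference lies in how the nonlocal boundary structure is handled. You propose to work directly on $\Mbar$, regarding the Green and Poisson pieces as having Schwartz kernels supported on the fibre product $\{(x,x')\in\partial\Mbar\times\partial\Mbar : \wp(x)=\wp(x')\}$, and to develop composition and symbol rules for this nonlocal calculus from scratch. The paper instead makes a cleaner reduction: near the boundary it pushes the entire problem forward via the unitary $\U : L^2(U(\partial\Mbar),E)\to L^2(Y\times[0,\eps),\wp_!E)$, so that $A$ becomes a block-diagonal operator $\A=\U A\U^{-1}$ acting on sections of the larger bundle $\wp_!E$, and the coupling condition $C$ becomes an ordinary \emph{local} differential boundary condition $\B_0$ for $\A$ on $Y_0\times[0,\eps)$. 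Parameter-ellipticity of $(A,C)$ is then literally the classical Shapiro--Lopatinsky condition for $(\A,\B_0)$, and the parametrix near the boundary is obtained by pulling back a standard Boutet de Monvel parametrix on $Y_0\times[0,\eps)$ via $\U^{-1}$. This buys the paper the ability to invoke the existing Boutet de Monvel machinery verbatim, so that essentially no new composition or trace formulas need to be proved; your approach would work, but the fibre-product calculus you flag as the principal obstacle is exactly what the push-forward trick sidesteps.
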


\noindent
By standard arguments (see \cite{GilkeyIndexTheory,Lesch,Shubin}) 
we get the following corollary from
Theorem~\ref{ResolventExpansion-Intro}.

\begin{corollary}\label{HeatTraceExpansion-Intro}
Let $(A,C)$ be elliptic with parameter in a closed sector of
the form $\Lambda = \{\lambda \in \C \st
|\arg(\lambda)| \geq \pi/2 - \eps\}$ for some $\eps > 0$.
Then the following holds:
\begin{enumerate}[a)]
\item The heat semigroup $e^{-tA_C} : L^2 \to L^2$ exists and
is of trace class for $t > 0$, and for every $\varphi \in 
C^{\infty}(\Mbar,\End(E))$
we have an asymptotic expansion
\begin{equation}\label{Heattraceasymp}
\Tr \bigl(\varphi e^{-tA_C}\bigr) \sim \sum_{j=0}^{\infty}
\alpha_j t^{\frac{j-\dim\Mbar}{\ord(A)}}
\quad \textup{as $t \to 0^+$}
\end{equation}
with certain heat invariants $\alpha_j = \alpha_j(\varphi,A,C)$.
\item If $A_C = A_C^* > 0$, then for every $\varphi \in
C^{\infty}(\Mbar,\End(E))$ the zeta function
$$
\zeta(s,\varphi,A_C) = \Tr\bigl(\varphi A_C^{-s}\bigr)
$$
extends to a meromorphic function on $\C$ with at most simple
poles at the points $(\dim\Mbar-j)/\ord(A)$, $j \in \N_0$,
and regular on $-\N_0$.
\item If $A_C = A_C^*$, then the asymptotics of the eigenvalues 
$\lambda_1 \leq \lambda_2  \leq \ldots$ of $A_C$ (counting 
multiplicities) is given by Weyl's law
\begin{equation}\label{WeylsLaw-Intro}
\lambda_k \sim \textup{Const}\cdot k^{\ord(A)/\dim\Mbar} \quad \textup{as $k \to \infty$}.
\end{equation}
\end{enumerate}
\end{corollary}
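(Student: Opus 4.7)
The plan is to derive all three parts of Corollary~\ref{HeatTraceExpansion-Intro} from Theorem~\ref{ResolventExpansion-Intro} by classical arguments based on the Cauchy integral formula and the Mellin transform.

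For part (a), I would use the Dunford--Riesz representation
\[
e^{-tA_C} = \frac{1}{2\pi i}\int_\Gamma e^{-t\lambda}(\lambda - A_C)^{-1}\,d\lambda,
\]
with $\Gamma$ a contour in the resolvent set running inside $\Lambda$ outside a large disc, asymptotic to the rays $\arg\lambda = \pm(\pi/2 - \eps/2)$, and enclosing the spectrum on its right. The resolvent norm bound in Theorem~\ref{ResolventExpansion-Intro}(b) guarantees absolute convergence on $L^2$ and exponential decay of $\|e^{-tA_C}\|$ as $t \to \infty$. To obtain the trace class property and the trace expansion, I would integrate by parts $N-1$ times in $\lambda$, using $\frac{d^{N-1}}{d\lambda^{N-1}}(A_C-\lambda)^{-1} = (N-1)!(A_C-\lambda)^{-N}$ with $N > \dim\Mbar/\ord(A)$, to arrive at a formula of the shape
\[
\Tr\bigl(\vp e^{-tA_C}\bigr) = \frac{\textup{const}}{t^{N-1}}\int_\Gamma e^{-t\lambda}\Tr\bigl(\vp(A_C-\lambda)^{-N}\bigr)\,d\lambda.
\]
Inserting the asymptotic expansion \eqref{Resolventtraceasymp}, rescaling $\mu = t\lambda$ to a $t$-independent contour, and using Hankel's identity for $1/\Gamma(z)$ to integrate term by term produces \eqref{Heattraceasymp}, with the $t$-powers collapsing correctly: the $j$-th term contributes $t^{(j-\dim\Mbar)/\ord(A)}$.

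For part (b), self-adjointness and positivity of $A_C$ force exponential decay of the heat trace as $t \to \infty$, so the Mellin representation
\[
\Gamma(s)\zeta(s,\vp,A_C) = \int_0^\infty t^{s-1}\Tr\bigl(\vp e^{-tA_C}\bigr)\,dt
\]
converges absolutely for $\Re s$ large. Splitting the integral at $t=1$, the piece over $(1,\infty)$ is entire in $s$; the piece over $(0,1)$, after substituting \eqref{Heattraceasymp} and integrating term by term, yields simple poles of $\Gamma(s)\zeta(s,\vp,A_C)$ at $s=(\dim\Mbar-j)/\ord(A)$. Division by $\Gamma(s)$, whose simple zeros at $-\N_0$ cancel any poles of $\zeta$ there, gives the claimed meromorphic structure.

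For part (c), I would apply Karamata's Tauberian theorem to the Laplace--Stieltjes transform $\Tr e^{-tA_C}$ of the spectral counting measure of $A_C$: the leading asymptotic $\alpha_0 t^{-\dim\Mbar/\ord(A)}$ from \eqref{Heattraceasymp} with $\vp = \id$ produces $N(\lambda) \sim \textup{Const}\cdot\lambda^{\dim\Mbar/\ord(A)}$ for the eigenvalue counting function, which is equivalent to \eqref{WeylsLaw-Intro}. The main technical point, and the step I would handle with the greatest care, is the contour deformation together with the term-by-term integration in part (a): one must verify that the remainders in \eqref{Resolventtraceasymp} are uniform in $\hat\lambda \in {\mathbb S}^1\cap\Lambda$ and near the vertex of the sector, so that they survive the rescaling $\mu = t\lambda$ and deliver genuine remainder bounds $O(t^{(K-\dim\Mbar)/\ord(A)})$ for partial sums up to index $K$. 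Once this is in place, the remaining Mellin and Tauberian arguments are entirely standard.
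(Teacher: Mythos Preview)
Your proposal is correct and is precisely what the paper intends: the paper does not give a detailed proof of this corollary at all, but simply states that it follows from Theorem~\ref{ResolventExpansion-Intro} ``by standard arguments'' with references to Gilkey, Lesch, and Shubin. Your Dunford--Riesz/Hankel argument for (a), Mellin transform for (b), and Karamata Tauberian for (c) are exactly the standard arguments contained in those references, so you have spelled out what the paper left implicit.
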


\noindent
The proof of Theorem~\ref{ResolventExpansion-Intro} relies
on reducing the nonlocal boundary contact problem $(A,C)$ near
$\partial\Mbar$ to the standard case of a local boundary value 
problem for a system that is associated with $A$ on
$Y\times[0,\eps)$. This utilizes the push-forward map $\wp_!$.
The expansion then follows by approximating the resolvent of
$A_C$ by a parametrix in a suitable pseudodifferential calculus
that is modelled on Boutet de Monvel's calculus.


\section{Coupling conditions and ellipticity}
\label{sec-Setup}

\noindent
Let $U(\partial\Mbar) \cong \partial\Mbar\times[0,\eps)$ be
a collar neighborhood of the boundary.
We extend the covering $\wp : \partial\Mbar \to Y$ to a
covering $\wp : U(\partial\Mbar) \to Y\times[0,\eps)$ in
the obvious manner. Choose a Riemannian metric $g_Y$ on $Y$,
and consider the metric $h=g_Y + dx^2$ on $Y\times[0,\eps)$.
The given metric $g$ on $\Mbar$ and this choice of metric on
$Y\times[0,\eps)$ determine a (discrete) measure $\mu_{(y,x)}$
on the fibre $\wp^{-1}\{(y,x)\}$ for every $y \in Y$ and
$0 \leq x < \eps$ so that the canonical map
$$
L_g^2(U(\partial\Mbar)) \cong L_h^2(Y\times[0,\eps),\wp_!\C)
$$
induced by $\wp$ is unitary. Here $\C \to U(\partial\Mbar)$ denotes
the trivial line bundle, and $\wp_!\C \to Y\times[0,\eps)$ is
the vector bundle with fibre $\wp_!\C_{(y,x)} = 
L^2(\wp^{-1}\{(y,x)\},\mu_{(y,x)})$ for every $y \in Y$ and
$x \in [0,\eps)$. Strictly speaking, $\wp_!\C$ is not 
necessarily a vector bundle since we do not assume that the
number of sheets of the covering $\wp : \partial\Mbar \to Y$
is the same over each connected component of $Y$, but this
is resolved by considering each component separately if necessary.
Likewise, consider the push-forward bundle $\wp_!E \to
Y\times[0,\eps)$. For the same reason $\wp_!E$ is
not necessarily a bundle over $Y\times[0,\eps)$, but its
restriction to $Y_0\times[0,\eps)$ is a vector bundle for
each connected component $Y_0$ of $Y$. The fibre over
$(y,x)$ is $\wp_!E_{(y,x)} = L^2(\wp^{-1}\{(y,x)\},E)$, and
the fibrewise $L^2$-inner product with respect to the measure 
$\mu_{(y,x)}$ on $\wp^{-1}\{(y,x)\}$ and the given Hermitian
metric on $E$ induces a Hermitian metric on the bundle $\wp_!E$.
With this data, the canonical map
\begin{equation}\label{CanMap}
\U : L_g^2(U(\partial\Mbar),E) \cong L_h^2(Y\times[0,\eps),\wp_!E)
\end{equation}
induced by $\wp$ is unitary. Moreover, $\U$ is
an isomorphism
$$
\U : H^s_{\textup{loc}}(U(\partial\Mbar),E) \cong
H^s_{\textup{loc}}(Y\times[0,\eps),\wp_!E)
$$
between the Sobolev spaces for all $s \in \R$.

Let $\Lambda \subset \C$ be a closed sector of the form
$\Lambda = \{re^{i\varphi} \st r \geq 0, |\varphi-\varphi_0|
\leq a\}$ for some $a > 0$. Let $A \in \Diff^m(\Mbar,E)$, $m > 0$.
Our standing assumption is that $A$ is elliptic with parameter
in $\Lambda$. Recall that this means that the principal symbol
$$
\sym(A) \in C^{\infty}\bigl(T^*\Mbar\setminus 0, \End(\pi^*E)\bigr),
\textup{ where } \pi : T^*\Mbar \to \Mbar,
$$
has no eigenvalue in $\Lambda$.

\begin{definition}\label{BoundaryContactProblem}
Let $Y_0$ be any connected component of $Y$, and let $F_{0,j} \to
Y_0\times[0,\eps)$, $j=1,\ldots,M_0$, be vector bundles.
Let $B_{0,j} \in \Diff^{m_{0,j}}(Y_0\times[0,\eps),\wp_!E,F_{0,j})$
be differential operators, where $m_{0,j} < m$. We call the mapping
$$
C_0 = \gamma_{Y_0}\begin{pmatrix} B_{0,1} \\ \vdots \\ B_{0,M_0}
\end{pmatrix}\U \circ r_{U_0} : C^{\infty}(\Mbar,E) \to
C^{\infty}\Bigl(Y_0,\bigoplus_{j=1}^{M_0}F_{0,j}|_{Y_0\times\{0\}}\Bigr)
$$
a coupling condition associated with $Y_0$, where
$\gamma_{Y_0} : f \mapsto f|_{Y_0\times\{0\}}$ is the trace map 
for functions on $Y_0\times[0,\eps)$, and $r_{U_0} : 
C^{\infty}(\Mbar,E) \to C^{\infty}(U_0,E)$ is the 
restriction of functions to the subset
$U_0 = \wp^{-1}\bigl(Y_0\times[0,\eps)\bigr)$ of
the collar neighborhood of $\partial\Mbar$.
By a coupling condition we mean a map
$$
C : C^{\infty}(\Mbar,E) \to \bigoplus_{Y_0 \subset Y}
C^{\infty}\Bigl(Y_0,\bigoplus_{j=1}^{M_0}F_{0,j}|_{Y_0\times\{0\}}
\Bigr)
$$
given by a choice of coupling condition for each component $Y_0$.

The mapping
$$
\begin{pmatrix} A \\ C \end{pmatrix} : H^s(\Mbar,E) \to
\begin{array}{c} H^{s-m}(\Mbar,E) \\ \oplus \\
\bigoplus_{Y_0 \subset Y}\bigoplus_{j=1}^{M_0}
H^{s-m_{0,j}-1/2}(Y_0,F_{0,j}|_{Y_0\times\{0\}}) \end{array}
$$
is continuous for all $s > m-1/2$. We will refer to the pair $(A,C)$
as a boundary contact problem.
\end{definition}

Consider the operator
$$
\A = \U A \U^{-1} : C^{\infty}(Y\times[0,\eps),\wp_!E) \to
C^{\infty}(Y\times[0,\eps),\wp_!E).
$$
$\A \in \Diff^m(Y\times[0,\eps),\wp_!E)$, and $\A$ is elliptic
with parameter in $\Lambda$ since this is the case for $A$.
Locally, $\A$ can be regarded as a `diagonal operator' with
the various restrictions of $A$ to the sheets of $\wp$ on the
diagonal.

Let $\B_0 = \gamma_{Y_0}\begin{pmatrix} B_{0,1} & \cdots & B_{0,M_0}
\end{pmatrix}^{\textup{tr}}$. Then
\begin{equation}\label{BVPPushforward}
\begin{pmatrix} \A \\ \B_0 \end{pmatrix} :
C^{\infty}(Y_0\times[0,\eps),\wp_!E) \to
\begin{array}{c}
C^{\infty}(Y_0\times[0,\eps),\wp_!E) \\ \oplus \\
C^{\infty}\bigl(Y_0,\bigoplus_{j=1}^{M_0}F_{0,j}|_{Y_0\times\{0\}}\bigr)
\end{array}
\end{equation}
is a boundary value problem for $\A$ on $Y_0\times[0,\eps)$.

\begin{definition}\label{ElliptBoundaryContactProblem}
We call the boundary contact problem $(A,C)$ elliptic with
parameter in $\Lambda$ if $A$ is elliptic with parameter in
$\Lambda$, and if the boundary value problem
\eqref{BVPPushforward} is elliptic with parameter in $\Lambda$
for all connected components $Y_0 \subset Y$,
i.e., if $(\A,\B_0)$ satisfies the Agmon or parameter-dependent
Shapiro-Lopatinsky condition with respect to the sector $\Lambda$.
Recall that this means that the boundary symbol
$$
\begin{pmatrix} \sym(\A)(y,0,\eta,D_x) - \lambda \\
\gamma_{x=0} \sym(\B_0)(y,0,\eta,D_x)
\end{pmatrix} :
\S(\overline{\R}_+) \otimes \pi^*\wp_!E|_{Y_0\times\{0\}} \to
\begin{array}{c}
\S(\overline{\R}_+) \otimes \pi^*\wp_!E|_{Y_0\times\{0\}} \\
\oplus \\
\bigoplus_{j=1}^{M_0}\pi^*F_{0,j}|_{Y_0\times\{0\}}
\end{array}
$$
is invertible for all $(y,\eta;\lambda) \in
\bigl(T^*Y_0\times\Lambda\bigr)\setminus \{0\}$, where
$\pi : T^*Y_0 \to Y_0$ is the canonical projection
(see \cite{GrubbBuch,SzWiley98} for details).
Here $\sym(\A)(y,x,\eta,\xi)$ and $\sym(\B_0)(y,x,\eta,\xi)$
denote the (vectors) of homogeneous principal symbols of $\A$
and $\B_0$, respectively, and $\gamma_{x=0}$ is the evaluation
map $f \mapsto f(0)$ on $\S(\overline{\R}_+)$.
\end{definition}

Ellipticity without parameters and the Fredholm property for 
realizations of elliptic operators subject to coupling conditions 
has been addressed in \cite{SavinSternin}.


\section{A class of pseudodifferential operators}
\label{sec-PseudoClass}

\noindent
In this section we define an enveloping pseudodifferential 
calculus associated with boundary contact problems. This calculus
is modelled on Boutet de Monvel's algebra of pseudodifferential
boundary value problems that depend strongly polyhomogeneous
on a parameter $\lambda \in \Lambda$ (see
\cite{GrubbBuch,SzWiley98}). The resolvent of $A_C$ will be 
approximated by a parametrix in this calculus to furnish
the proof of Theorem~\ref{ResolventExpansion-Intro}, see
Section~\ref{sec-ParamAsymptotics}.

Let $J_{0,\pm} \to Y_0$ be vector bundles on each 
connected component $Y_0 \subset Y$ (the zero bundle is
admitted here). Fix a vector field
$\partial$ on the double $2\Mbar$ of $\Mbar$ that coincides in
the collar neighborhood $\partial\Mbar\times(-\eps,\eps)$ of the boundary
with the vector field $\partial_x$, and let
$$
\partial_+ = r_+\nabla^E_\partial e_+ : C^{\infty}(\Mbar,E) \to
C^{\infty}(\Mbar,E),
$$
where $e_+$ is the trivial extension operator by zero for functions
defined on $M$ to the double $2\Mbar$, $r_+$ is the 
restriction operator for distributions on $2\Mbar$ to $M$,
and $\nabla^E$ is a Hermitian connection on the (extended) bundle $E \to 2\Mbar$.

Moreover, let $\ell \in \N$ be fixed. $\ell$ represents the
anisotropy between the covariables and the parameter
$\lambda \in \Lambda$. For the treatment of the resolvent
of $A_C$ we will choose $\ell = m = \ord(A)$.

\begin{definition}[Regularizing Green operators]
\label{RegularizingGreen}
\begin{enumerate}[a)]
\item By $\Psi^{-\infty,0}(\Lambda)$ we denote the class of all 
operator families
$$
G(\lambda) : \begin{array}{c} H^s_0(\Mbar,E) \\ \oplus \\
\bigoplus\limits_{Y_0 \subset Y} H^s(Y_0,J_{0,-}) \end{array} \to
\begin{array}{c} H^t(\Mbar,E) \\ \oplus \\
\bigoplus\limits_{Y_0 \subset Y} H^t(Y_0,J_{0,+}) \end{array}
$$
that depend rapidly decreasing on $\lambda \in \Lambda$, for
all $s,t \in \R$.
In other words, $\Psi^{\infty,0}(\Lambda)$ consists of all
operator families with $C^{\infty}$-kernels that depend rapidly
decreasing on $\lambda \in \Lambda$.
\item For $d \in \N_0$ let $\Psi^{-\infty,d}(\Lambda)$ be
the class of all operator families of the form
$$
G(\lambda) = \sum\limits_{j=0}^d G_j(\lambda)
\begin{pmatrix} \partial_+ & 0 \\ 0 & 0 \end{pmatrix}^j :
\begin{array}{c} C^{\infty}(\Mbar,E) \\ \oplus \\
\bigoplus\limits_{Y_0 \subset Y} C^{\infty}(Y_0,J_{0,-}) 
\end{array} \to \begin{array}{c} C^{\infty}(\Mbar,E) \\ \oplus \\
\bigoplus\limits_{Y_0 \subset Y} C^{\infty}(Y_0,J_{0,+}) \end{array}
$$
with $G_j(\lambda) \in \Psi^{-\infty,0}(\Lambda)$.
\end{enumerate}
\end{definition}

Let $\varphi \in C^{\infty}(Y\times[0,\eps))$ be such that 
$\varphi$ is locally constant on $Y = Y\times\{0\}$. The 
restriction of $\varphi$ to $Y$ is a sum
\begin{equation}\label{PhiY}
\varphi|_Y = \sum\limits_{Y_0 \subset Y}a_{Y_0}\cdot\chi_{Y_0}
\end{equation}
of multiples of the characteristic functions $\chi_{Y_0}$
associated with the various connected components $Y_0$
of $Y$. Since multiplication by the characteristic function 
$\chi_{Y_1}$ is a projection operator, it thus makes sense to 
consider
$$
\chi_{Y_1} : \bigoplus\limits_{Y_0 \subset Y} C^{\infty}(Y_0,J_{0,\pm}) \to
C^{\infty}(Y_1,J_{1,\pm})
$$
as the projection operator to the subspace
$$
C^{\infty}(Y_1,J_{1,\pm}) \hookrightarrow
\bigoplus\limits_{Y_0 \subset Y} C^{\infty}(Y_0,J_{0,\pm}).
$$
Consequently, we consider
$$
\varphi|_Y :
\bigoplus\limits_{Y_0 \subset Y} C^{\infty}(Y_0,J_{0,\pm}) \to
\bigoplus\limits_{Y_0 \subset Y} C^{\infty}(Y_0,J_{0,\pm})
$$
an operator defined by the sum of multiples \eqref{PhiY}
of the projection operators associated with the characteristic 
functions of the connected components.

Let $\tilde\varphi \in C^{\infty}(\Mbar)$ be such that
$\tilde\varphi$ is the pull-back $\wp^*\varphi$ for a function
$\varphi$ as above near the boundary $\partial\Mbar$.
For such functions $\tilde\varphi$ we are going to use the 
notational convention that $\tilde\varphi$ is also to be 
understood as the operator
$$
\tilde\varphi = \begin{pmatrix} \tilde\varphi & 0 \\ 0 & \varphi|_Y
\end{pmatrix} :
\begin{array}{c} C^{\infty}(\Mbar,E) \\ \oplus \\
\bigoplus\limits_{Y_0 \subset Y} C^{\infty}(Y_0,J_{0,\pm}) 
\end{array} \to
\begin{array}{c} C^{\infty}(\Mbar,E) \\ \oplus \\
\bigoplus\limits_{Y_0 \subset Y} C^{\infty}(Y_0,J_{0,\pm}) 
\end{array},
$$
given by the multiplication operator with the function 
$\tilde\varphi$ in the upper left corner, and in
the lower right corner by the operator $\varphi|_Y$ explained
above.

\begin{definition}[Singular Green operators]
\label{SingularGreen}
Let $\mu \in \Z$, $d \in \N_0$. The class
$\Psi_G^{\mu,d}(\Lambda)$ consists of all operator families
$$
G(\lambda) : \begin{array}{c} C^{\infty}(\Mbar,E) \\ \oplus \\
\bigoplus\limits_{Y_0 \subset Y} C^{\infty}(Y_0,J_{0,-}) 
\end{array} \to \begin{array}{c} C^{\infty}(\Mbar,E) \\ \oplus \\
\bigoplus\limits_{Y_0 \subset Y} C^{\infty}(Y_0,J_{0,+}) \end{array}
$$
with the following properties:
\begin{itemize}
\item Let $Y_0 \subset Y$ be any connected component, and let
$\varphi,\psi \in C^{\infty}(Y_0\times[0,\eps))$ be compactly
supported and constant on $Y_0$. Consider the operator family
\begin{equation}\label{SingGreenComponent}
\begin{pmatrix} \U & 0 \\ 0 & 1 \end{pmatrix} \circ
\bigl(\wp^*\varphi G(\lambda) \wp^*\psi\bigr) \circ
\begin{pmatrix} \U^{-1} & 0 \\ 0 & 1 \end{pmatrix},
\end{equation}
acting in the spaces
$$
\begin{array}{c} C^{\infty}(Y_0\times[0,\eps),\wp_!E) \\ \oplus \\
C^{\infty}(Y_0,J_{0,-}) \end{array} \to
\begin{array}{c} C^{\infty}(Y_0\times[0,\eps),\wp_!E) \\ \oplus \\
C^{\infty}(Y_0,J_{0,+}) \end{array}.
$$
We require this family to belong to the class of
(strongly polyhomogeneous) anisotropic parameter-dependent (generalized)
singular Green operators of order $\mu$ and type $d$ in
Boutet de Monvel's calculus on $Y_0\times[0,\eps)$.
\item Let $\tilde\varphi,\tilde\psi \in C^{\infty}(\Mbar)$ be such
that $\tilde\varphi = \wp^*\varphi$ and $\tilde\psi = \wp^*\psi$
near the boundary $\partial\Mbar$ for functions $\varphi,\psi \in
C^{\infty}(Y\times[0,\eps))$ that are locally constant on
$Y$, and assume that $\supp\varphi \cap \supp\psi \cap Y =
\emptyset$.

We then require the operator family
$\tilde\varphi G(\lambda) \tilde\psi$ to belong to the class
$\Psi^{-\infty,d}(\Lambda)$ defined in
Definition~\ref{RegularizingGreen}.
\end{itemize}
Since the operator family
\eqref{SingGreenComponent} belongs to Boutet de Monvel's calculus
on $Y_0\times[0,\eps)$ it has a principal boundary symbol associated with it. Let
$\sym_{Y_0}(G)(y,\eta;\lambda)$ be that principal boundary symbol, an element of
$$
C^{\infty}\left(
\bigl(T^*Y_0\times\Lambda\bigr)\setminus \{0\},
\Hom\left(
\begin{array}{c}
\S(\overline{\R}_+)\otimes\pi^*\wp_!E\big|_{Y_0} \\
\oplus \\
\pi^*J_{0,-}
\end{array},
\begin{array}{c}
\S(\overline{\R}_+)\otimes\pi^*\wp_!E\big|_{Y_0} \\
\oplus \\
\pi^*J_{0,+}
\end{array}
\right)
\right),
$$
where $\pi : \bigl(T^*Y_0\times\Lambda\bigr)\setminus\{0\} \to Y_0$ is the canonical
projection.
Thus, associated with $G(\lambda)$, we have an operator valued principal $Y_0$-symbol on
$\bigl(T^*Y_0\times\Lambda\bigr)\setminus\{0\}$ for all connected components $Y_0 \subset Y$.
Recall that $\sym_{Y_0}(G)(y,\eta;\lambda)$ is homogeneous in the sense that
$\sym_{Y_0}(G)(y,\varrho\eta;\varrho^{\ell}\lambda)$ equals
$$
\varrho^{\mu}
\begin{pmatrix} \kappa_{\varrho}\otimes \textup{Id}_{\pi^*\wp_!E|_{Y_0}} & 0 \\ 0 & \textup{Id}_{\pi^*J_{0,+}} \end{pmatrix}
\sym_{Y_0}(G)(y,\eta;\lambda)
\begin{pmatrix} \kappa^{-1}_{\varrho}\otimes \textup{Id}_{\pi^*\wp_!E|_{Y_0}} & 0 \\ 0 & \textup{Id}_{\pi^*J_{0,-}} \end{pmatrix}
$$
for $\varrho > 0$, where $\bigl(\kappa_{\varrho}u\bigr)(x) = \varrho^{1/2}u(\varrho x)$.
\end{definition}

\begin{remark}\label{Boundarysymbolstructure}
In local coordinates near the boundary of $Y_0\times[0,\eps)$, the local boundary
symbols of the operator \eqref{SingGreenComponent} are operator families
$$
g(y,\eta;\lambda) = \sum\limits_{j=0}^d g_j(y,\eta;\lambda)\begin{pmatrix}
\partial_+ & 0 \\ 0 & 0 \end{pmatrix}^j,
$$
where the $g_j(y,\eta;\lambda)$ are boundary symbols of order $\mu-j$ and type zero.

A boundary symbol of order $\mu \in \Z$ and type zero is a $C^{\infty}$-function
\begin{equation}\label{BoundarySymbolMapping}
h(y,\eta;\lambda) : \begin{array}{c} \S'(\overline{\R}_+)\otimes \C^{\dim \wp_!E|_{Y_0}} \\
\oplus \\ \C^{\dim J_{0,-}} \end{array} \to
\begin{array}{c} \S(\overline{\R}_+)\otimes \C^{\dim \wp_!E|_{Y_0}} \\ \oplus \\
\C^{\dim J_{0,+}} \end{array}
\end{equation}
such that
$$
\begin{pmatrix} \kappa^{-1}_{(1+|\eta|+|\lambda|^{1/\ell})} & 0 \\ 0 & 1 \end{pmatrix}
\bigl(
\partial^{\alpha}_{y}\partial^{\beta}_{\eta}\partial^{\gamma}_{\lambda}h(y,\eta;\lambda)
\bigr)
\begin{pmatrix} \kappa_{(1+|\eta|+|\lambda|^{1/\ell})} & 0 \\ 0 & 1 \end{pmatrix}
$$
is $O\bigl((1+|\eta|+|\lambda|^{1/\ell})^{\mu-|\beta|-\ell|\gamma|}\bigr)$ as
$|(\eta,\lambda)| \to \infty$ in the topology of uniform convergence on bounded
subsets of the continuous operators in the spaces \eqref{BoundarySymbolMapping}, uniformly
for $y$ in compact subsets.
Moreover, $h(y,\eta;\lambda)$ has an asymptotic expansion
$$
h(y,\eta;\lambda) \sim \sum\limits_{j=0}^{\infty}\chi(\eta,\lambda)h_{(\mu-j)}(y,\eta;\lambda),
$$
where $\chi$ is an excision function of the origin, and the operator family
$h_{(\mu-j)}(y,\eta;\lambda)$ is (twisted) anisotropic homogeneous of degree
$\mu-j$ in the sense that
$$
h_{(\mu-j)}(y,\varrho\eta;\varrho^{\ell}\lambda) = \varrho^{\mu-j}
\begin{pmatrix} \kappa_{\varrho} & 0 \\ 0 & 1 \end{pmatrix}
h_{(\mu-j)}(y,\eta;\lambda)
\begin{pmatrix} \kappa_{\varrho}^{-1} & 0 \\ 0 & 1 \end{pmatrix}
$$
for $\varrho > 0$ and $(\eta,\lambda) \neq (0,0)$.

This description of the boundary symbol structure of generalized singular Green operators
in Boutet  de Monvel's calculus follows Schulze \cite{SzWiley98}, see also
\cite{SchroheShortIntro}.

Let
$$
h_0(y,\eta;\lambda) :
\S'(\overline{\R}_+)\otimes \C^{\dim \wp_!E|_{Y_0}} \to
\S(\overline{\R}_+)\otimes \C^{\dim \wp_!E|_{Y_0}}
$$
be the upper left corner of the symbol \eqref{BoundarySymbolMapping}. From the latter
description it is immediately clear that $h_0(y,\eta;\lambda)$ is of trace class as
an operator acting in $L^2(\overline{\R}_+)\otimes \C^{\dim \wp_!E|_{Y_0}}$, and that its
trace $\tr_{L^2}h_0(y,\eta;\lambda)$ is an ordinary anisotropic parameter-dependent
classical symbol of order $\mu$ (see Remark~\ref{OrdinarySymbols}).
\end{remark}

\begin{remark}\label{Relationsmoothing}
Let $\B_G^{\mu,d}(\Lambda)$ be the class of anisotropic parameter-dependent singular
Green operators $G(\lambda) : C^{\infty}(\Mbar,E) \to C^{\infty}(\Mbar,E)$ of order $\mu$ 
and type $d$ in Boutet de Monvel's calculus on $\Mbar$, and let $\B^{-\infty,d}(\Lambda)$
be the subspace of regularizing singular Green operators of type $d$. Then
$$
\B_G^{\mu,d}(\Lambda) \subset \Psi_G^{\mu,d}(\Lambda) \textup{ and }
\B^{-\infty,d}(\Lambda) = \Psi^{-\infty,d}(\Lambda).
$$
In the interesting cases for us $\B_G^{\mu,d}(\Lambda) \neq \Psi_G^{\mu,d}(\Lambda)$
because the covering $\wp : \partial\Mbar \to Y$ has multiple sheets.
\end{remark}

\begin{definition}[The full calculus]\label{FullAlgebra}
Let $\mu \in \Z$, $d \in \N_0$. The class $\Psi^{\mu,d}(\Lambda)$ consists of all
operator families
\begin{equation}\label{FullAlgebraOperator}
A(\lambda) =
\begin{pmatrix} r_+A_0(\lambda)e_+ & 0 \\ 0 & 0 \end{pmatrix} + G(\lambda) :
\begin{array}{c} C^{\infty}(\Mbar,E) \\ \oplus \\
\bigoplus\limits_{Y_0 \subset Y} C^{\infty}(Y_0,J_{0,-}) 
\end{array} \to \begin{array}{c} C^{\infty}(\Mbar,E) \\ \oplus \\
\bigoplus\limits_{Y_0 \subset Y} C^{\infty}(Y_0,J_{0,+}) \end{array},
\end{equation}
where $G(\lambda) \in \Psi^{\mu,d}_G(\Lambda)$, and $A_0(\lambda)$ is an anisotropic
parameter-dependent pseudodifferential operator on $2\Mbar$ with the transmission
property at $\partial\Mbar$.
\end{definition}

\begin{remark}\label{OrdinarySymbols}
The local symbols $a(z,\zeta;\lambda)$ of $A_0(\lambda)$ in
Definition~\ref{FullAlgebra} satisfy the symbol estimates
$$
|\partial^{\alpha}_z\partial^{\beta}_{\zeta}\partial^{\gamma}_{\lambda}a(z,\zeta;\lambda)|
= O\bigl((1 + |\zeta| + |\lambda|^{1/\ell})^{\mu-|\beta|-\ell|\gamma|}\bigr)
$$
as $|(\zeta,\lambda)| \to \infty$, uniformly for $z$ in compact subsets, and they have
an asymptotic expansion
$$
a(z,\zeta;\lambda) \sim \sum\limits_{j=0}^{\infty}
\chi(\zeta,\lambda)a_{(\mu-j)}(z,\zeta;\lambda),
$$
where $\chi$ is an excision function of the origin, and $a_{(\mu-j)}(z,\zeta;\lambda)$
is anisotropic homogeneous of degree $\mu-j$, i.e.,
$$
a_{(\mu-j)}(z,\varrho\zeta;\varrho^{\ell}\lambda) = \varrho^{\mu-j}
a_{(\mu-j)}(z,\zeta;\lambda)
\textup{ for $\varrho > 0$ and $(\zeta,\lambda) \neq (0,0)$}.
$$
\end{remark}

Let $\varphi \in C^{\infty}(Y_0\times[0,\eps))$ be compactly supported such that
$\varphi \equiv 1$ near $Y_0$.
$$
\U \circ \bigl(\wp^*\varphi r_+A_0(\lambda)e_+ \wp^*\varphi\bigr) \circ \U^{-1} :
C^{\infty}(Y_0\times[0,\eps),\wp_!E) \to C^{\infty}(Y_0\times[0,\eps),\wp_!E)
$$
is a parameter-dependent pseudodifferential operator in Boutet de Monvel's calculus
on $Y_0\times[0,\eps)$, and thus has a principal boundary symbol. For each connected
component $Y_0 \subset Y$, let $\sym_{Y_0}(A_0)(y,\eta;\lambda)$ be that principal boundary
symbol on $\bigl(T^*Y_0\times\Lambda)\setminus\{0\}$.
Consequently, the following principal symbols are associated with every operator
$A(\lambda) \in \Psi^{\mu,d}(\Lambda)$ as given by \eqref{FullAlgebraOperator}:
\begin{itemize}
\item The homogeneous principal symbol
$$
\sym(A)(z,\zeta;\lambda):= \sym(A_0)(z,\zeta;\lambda) \in
C^{\infty}\bigl((T^*\Mbar\times\Lambda)\setminus\{0\},\End(\pi^*E)\bigr),
$$
where $\pi : (T^*\Mbar\times\Lambda)\setminus\{0\} \to \Mbar$ is the canonical projection.
\item The principal $Y_0$-symbol
$$
\sym_{Y_0}(A)(y,\eta;\lambda):= \begin{pmatrix} \sym_{Y_0}(A_0)(y,\eta;\lambda) & 0 \\
0 & 0 \end{pmatrix} + \sym_{Y_0}(G)(y,\eta;\lambda)
$$
defined on $\bigl(T^*Y_0\times\Lambda\bigr)\setminus\{0\}$ for every connected component
$Y_0 \subset Y$.
\end{itemize}

\begin{definition}\label{EllipticityCalculus}
An operator family $A(\lambda) \in \Psi^{\mu,d}(\Lambda)$ is parameter-dependent
elliptic if its homogeneous principal symbol $\sym(A)(z,\zeta;\lambda)$ is
invertible for every $(z,\zeta;\lambda) \in (T^*\Mbar\times\Lambda)\setminus\{0\}$,
and its principal $Y_0$-symbol $\sym_{Y_0}(A)(y,\eta;\lambda)$ is invertible
for all $(y,\eta;\lambda) \in (T^*Y_0\times\Lambda)\setminus\{0\}$, for all
connected components $Y_0 \subset Y$.
\end{definition}

\begin{theorem}\label{ExtensiontoSobolevSpaces}
\begin{enumerate}[a)]
\item Every $A(\lambda) \in \Psi^{\mu,d}(\Lambda)$ extends by continuity to a family of
continuous operators
\begin{equation}\label{AinSobSp}
A(\lambda) : \begin{array}{c} H^s(\Mbar,E) \\ \oplus \\
\bigoplus\limits_{Y_0 \subset Y} H^s(Y_0,J_{0,-}) \end{array} \to
\begin{array}{c} H^{s-\mu}(\Mbar,E) \\ \oplus \\
\bigoplus\limits_{Y_0 \subset Y} H^{s-\mu}(Y_0,J_{0,+}) \end{array}
\end{equation}
for $s > d-1/2$.
\item Let $A(\lambda) \in \Psi^{\mu,0}(\Lambda)$, where $\mu \leq 0$.
Then the operator norm of
$$
A(\lambda) : \begin{array}{c} L^2(\Mbar,E) \\ \oplus \\
\bigoplus\limits_{Y_0 \subset Y} L^2(Y_0,J_{0,-}) \end{array} \to
\begin{array}{c} L^2(\Mbar,E) \\ \oplus \\
\bigoplus\limits_{Y_0 \subset Y} L^2(Y_0,J_{0,+}) \end{array}
$$
is $O(|\lambda|^{\mu/\ell})$ as $|\lambda| \to \infty$.
\end{enumerate}
\end{theorem}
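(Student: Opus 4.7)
The plan is to reduce, via a partition of unity and conjugation by the unitary $\U$ from \eqref{CanMap}, to standard parameter-dependent mapping properties in Boutet de Monvel's calculus on a smooth manifold with boundary (see \cite{GrubbBuch,SzWiley98}). First I would decompose $A(\lambda)$ into its pseudo and Green parts as in \eqref{FullAlgebraOperator}. The pseudo piece $r_+A_0(\lambda)e_+$ is a parameter-dependent $\Psi$DO with transmission property at $\partial\Mbar$ in the anisotropic strongly polyhomogeneous calculus, and the classical theory directly gives continuity $H^s(\Mbar,E) \to H^{s-\mu}(\Mbar,E)$ for $s > -1/2$ as well as the $L^2$ norm bound $O(|\lambda|^{\mu/\ell})$ when $\mu \leq 0$.

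For the singular Green piece $G(\lambda) \in \Psi_G^{\mu,d}(\Lambda)$ I would localize. Choose a partition of unity $\{\tilde\varphi_k\}$ on $\Mbar$ such that, near $\partial\Mbar$, each $\tilde\varphi_k = \wp^*\varphi_k$ with $\varphi_k \in C^{\infty}(Y\times[0,\eps))$ compactly supported in a collar over a single connected component $Y_0 \subset Y$ and locally constant on $Y$; such a partition exists by pulling back via $\wp$ a partition of unity on $Y$ subordinate to a cover by sets each contained in a single component, and then extending smoothly into the interior. Writing $G(\lambda) = \sum_{k,\ell}\tilde\varphi_k G(\lambda)\tilde\varphi_\ell$, any term with $\supp\varphi_k \cap \supp\varphi_\ell \cap Y = \emptyset$ belongs to $\Psi^{-\infty,d}(\Lambda)$ by the second bullet of Definition~\ref{SingularGreen} and is smoothing and Schwartz in $\lambda$, hence contributes negligibly to both (a) and (b). Each remaining term has both cutoffs pulled back from the same $Y_0$; by the first bullet of Definition~\ref{SingularGreen} its conjugate under $\U \oplus 1$ is an honest anisotropic parameter-dependent singular Green operator of order $\mu$ and type $d$ on $Y_0 \times [0,\eps)$ in Boutet de Monvel's calculus. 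Since $\U$ is a Sobolev isomorphism of order zero, the classical continuity $H^s \to H^{s-\mu}$ for $s > d - 1/2$ transfers back to $\Mbar$, establishing (a).

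For (b), where $\mu \leq 0$ and $d = 0$, the twisted anisotropic homogeneity of the boundary symbols recorded in Remark~\ref{Boundarysymbolstructure}, combined with an operator-valued Calder\'on--Vaillancourt type estimate standard in Boutet de Monvel's calculus, gives $L^2$-boundedness of each conjugated localized piece with operator norm $O(|\lambda|^{\mu/\ell})$; summing the finitely many contributions and adding the pseudo term yields the global bound. The only step genuinely new compared with the classical situation is verifying that a partition of unity on $\Mbar$ whose cutoffs near $\partial\Mbar$ are all pulled back from $Y$ and compatible with the component decomposition can be chosen, since this is precisely what allows invocation of Definition~\ref{SingularGreen} for \emph{both} factors in each pair $(\tilde\varphi_k,\tilde\varphi_\ell)$, thereby reducing the calculus on $\Mbar$ cleanly to the standard calculus on $Y \times [0,\eps)$. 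This point is routine but conceptually captures the raison d'\^etre of the class $\Psi^{\mu,d}(\Lambda)$ rather than representing a serious obstacle.
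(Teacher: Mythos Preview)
Your proposal is correct and follows essentially the same route as the paper: split into the pseudodifferential and singular Green parts, localize the Green part near the boundary using cutoffs that satisfy the hypotheses of Definition~\ref{SingularGreen}, conjugate by $\U$ to land in Boutet de Monvel's calculus on $Y_0\times[0,\eps)$, and invoke the standard mapping and norm results there. The only cosmetic difference is that the paper avoids your double sum $\sum_{k,\ell}\tilde\varphi_k G(\lambda)\tilde\varphi_\ell$ by choosing, for each component $Y_0$, a single nested pair $\varphi_0,\psi_0\in C_c^\infty(Y_0\times[0,\eps))$ with $\varphi_0\equiv 1$ near $Y_0$ and $\psi_0\equiv 1$ near $\supp\varphi_0$, writing $G(\lambda)=\sum_{Y_0}\wp^*\varphi_0\,G(\lambda)\,\wp^*\psi_0 + R(\lambda)$ with $R(\lambda)\in\Psi^{-\infty,d}(\Lambda)$; this makes the ``partition of unity compatibility'' issue you highlight disappear, but the substance is identical.
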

\begin{proof}
Write
$$
A(\lambda) =
\begin{pmatrix} r_+A_0(\lambda)e_+ & 0 \\ 0 & 0 \end{pmatrix} + G(\lambda)
$$
as in \eqref{FullAlgebraOperator}. Both a) and b) are clear for
$\begin{pmatrix} r_+A_0(\lambda)e_+ & 0 \\ 0 & 0 \end{pmatrix}$.

Let $\varphi_0,\psi_0 \in C^{\infty}(Y_0\times[0,\eps))$ be compactly supported
such that $\varphi_0 \equiv 1$ near $Y_0$, and $\psi_0 \equiv 1$ in a neighborhood
of the support of $\varphi_0$. Then
$$
G(\lambda) = \sum\limits_{Y_0\subset Y}\wp^*\varphi_0 G(\lambda) \wp^*\psi_0 + R(\lambda),
$$
where $R(\lambda) \in \Psi^{-\infty,d}(\Lambda)$.
Both a) and b) are evident for $R(\lambda)$. The operator
$$
\begin{pmatrix} \U & 0 \\ 0 & 1 \end{pmatrix} \circ
\bigl(\wp^*\varphi_0 G(\lambda) \wp^*\psi_0\bigr)
\circ \begin{pmatrix} \U^{-1} & 0 \\ 0 & 1 \end{pmatrix}
$$
is a parameter-dependent generalized singular Green operator in Boutet de Monvel's
calculus on $Y_0\times[0,\eps)$ (supported near the boundary $Y_0$). Consequently,
both assertions a) and b) are valid for this operator on $Y_0\times[0,\eps)$.
Since the canonical map $\U$ is an isometry in $L^2$ and an isomorphism between the
Sobolev spaces, see the discussion around \eqref{CanMap}, a) and b) follow for
the operators $\wp^*\varphi_0 G(\lambda) \wp^*\psi_0$.
\end{proof}

\begin{theorem}\label{CompositionTheorem}
Let $A_j(\lambda) \in \Psi^{\mu_j,d_j}(\Lambda)$, $j=1,2$, and assume that the
vector bundles fit together such that the composition $A_1(\lambda)A_2(\lambda)$ is
defined.

Then $A_1(\lambda)A_2(\lambda) \in \Psi^{\mu_1+\mu_2,d}(\Lambda)$, where
$d = \max\{d_1+\mu_2,d_2\}$. We have
\begin{align*}
\sym(A_1A_2)(z,\zeta;\lambda) &= \sym(A_1)(z,\zeta;\lambda)\sym(A_2)(z,\zeta;\lambda), \\
\sym_{Y_0}(A_1A_2)(y,\eta;\lambda) &= \sym_{Y_0}(A_1)(y,\eta;\lambda)\sym_{Y_0}(A_2)(y,\eta;\lambda)
\end{align*}
for all connected components $Y_0 \subset Y$.
\end{theorem}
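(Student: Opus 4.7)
The plan is to decompose each $A_j(\lambda)$, $j=1,2$, as in \eqref{FullAlgebraOperator}, namely
$$
A_j(\lambda) = \begin{pmatrix} r_+A_{0,j}(\lambda)e_+ & 0 \\ 0 & 0 \end{pmatrix} + G_j(\lambda),
$$
and expand $A_1A_2$ into four pieces: the interior-times-interior piece, two cross terms involving one $r_+A_{0,j}e_+$ and one $G_k$, and the pure Green piece $G_1G_2$. I will show that each piece lies in $\Psi^{\mu_1+\mu_2,d'}(\Lambda)$ for some $d'\le d=\max\{d_1+\mu_2,d_2\}$, with the correct principal interior and principal $Y_0$-symbols, and then sum the contributions.

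For the interior-times-interior piece I would invoke the classical Boutet de Monvel leftover formula
$$
r_+A_{0,1}(\lambda)e_+r_+A_{0,2}(\lambda)e_+ = r_+A_{0,1}(\lambda)A_{0,2}(\lambda)e_+ + L(A_{0,1},A_{0,2})(\lambda),
$$
where $A_{0,1}A_{0,2}$ is an anisotropic parameter-dependent pseudodifferential operator of order $\mu_1+\mu_2$ on $2\Mbar$ with the transmission property, and $L(A_{0,1},A_{0,2})$ is a parameter-dependent singular Green operator of order $\mu_1+\mu_2$ and type $\max\{\mu_2,0\}\le d$ concentrated at $\partial\Mbar$. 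After conjugation by $\U$ on each connected component $Y_0 \subset Y$, the leftover lies in the singular Green class on $Y_0\times[0,\eps)$ required by Definition~\ref{SingularGreen}, and it is regularizing off the boundary; its principal boundary symbol is the usual one from the leftover formula.

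For the three remaining pieces I would localize near each component $Y_0 \subset Y$ by cut-offs $\varphi_0,\psi_0 \in C^{\infty}(Y_0\times[0,\eps))$ with $\varphi_0 \equiv 1$ near $Y_0$ and $\psi_0 \equiv 1$ on $\supp\varphi_0$. After conjugating the localized parts by $\U$, each becomes a composition of parameter-dependent operators in Boutet de Monvel's calculus on $Y_0\times[0,\eps)$: the $r_+A_{0,j}e_+$-pieces are pseudodifferential operators with the transmission property there, while the $G_j$-pieces are singular Green operators in the sense of Boutet de Monvel by Definition~\ref{SingularGreen}. The classical composition theorem for Boutet de Monvel's algebra (see \cite{GrubbBuch,SzWiley98}) then yields a parameter-dependent singular Green operator of order $\mu_1+\mu_2$ and type $\max\{d_1+\mu_2,d_2\}$, with principal boundary symbol equal to the product of the two factors' principal boundary symbols. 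The remaining contributions, in which the cut-offs have disjoint supports on $Y$, are of the form $\tilde\varphi(\,\cdot\,)\tilde\psi$ with $\supp\varphi\cap\supp\psi\cap Y=\emptyset$; by the pseudolocal property combined with Theorem~\ref{ExtensiontoSobolevSpaces} they belong to $\Psi^{-\infty,d}(\Lambda)$.

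The main obstacle is the careful bookkeeping of the type. The quantity $d_1+\mu_2$ in $d=\max\{d_1+\mu_2,d_2\}$ reflects the fact that normal derivatives $\partial_+^j$ with $j\le d_1$ appearing in the Green factor $G_1$, when pushed through either an order-$\mu_2$ pseudodifferential operator with transmission or an order-$\mu_2$ Green operator, contribute an additional $\mu_2$ to the type by the Taylor expansion at $x=0$ in the boundary symbol; this is the same mechanism as in the classical calculus. Once this count is verified component by component via $\U$, multiplicativity of the principal interior symbol and of each principal $Y_0$-symbol is immediate from the corresponding classical statements, and global membership of $A_1A_2$ in $\Psi^{\mu_1+\mu_2,d}(\Lambda)$ follows from the summed decomposition.
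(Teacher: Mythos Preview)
Your approach is essentially the paper's: localize to a collar of each component $Y_0$, conjugate by $\U$, and invoke the composition theorem in the parameter-dependent Boutet de Monvel calculus on $Y_0\times[0,\eps)$; the interior pseudodifferential parts combine via the leftover formula on $\Mbar$, exactly as you say (the paper phrases this as ``the pseudodifferential parts multiply by the composition theorem in Boutet de Monvel's calculus on $\Mbar$''). The only organizational difference is that you decompose each factor $A_j(\lambda)$ first and expand the product into four pieces, whereas the paper localizes the full product $A_1(\lambda)A_2(\lambda)$ directly with nested cut-offs; these come to the same thing.

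There is one step you have underargued. Your sentence ``the remaining contributions \ldots\ belong to $\Psi^{-\infty,d}(\Lambda)$ by the pseudolocal property combined with Theorem~\ref{ExtensiontoSobolevSpaces}'' hides the fact that these remainders are \emph{compositions}, not single operators sandwiched between disjoint cut-offs. What you actually need is the two-sided ideal property
\[
\Psi^{\mu_1,d_1}(\Lambda)\times\Psi^{-\infty,d_2}(\Lambda)\to\Psi^{-\infty,d}(\Lambda),\qquad
\Psi^{-\infty,d_1}(\Lambda)\times\Psi^{\mu_2,d_2}(\Lambda)\to\Psi^{-\infty,d}(\Lambda),
\]
which the paper proves first and then uses repeatedly. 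The left-ideal direction does follow from Theorem~\ref{ExtensiontoSobolevSpaces}, but the right-ideal direction does not: if $G(\lambda)=\sum_{j\le d_1}G_j(\lambda)\bigl(\begin{smallmatrix}\partial_+&0\\0&0\end{smallmatrix}\bigr)^j\in\Psi^{-\infty,d_1}$ and $A(\lambda)\in\Psi^{\mu_2,d_2}$, you must push each $\partial_+^j$ through $A(\lambda)$ and check that the result is again a finite sum of smoothing kernels composed with powers of $\partial_+$, of type at most $d$. The paper does this by localizing $A(\lambda)$, conjugating by $\U$, and invoking Boutet de Monvel on $Y_0\times[0,\eps)$ once more. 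Your outline is correct, but this ideal step should be made explicit rather than absorbed into ``pseudolocality''.
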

\begin{proof}
We first observe that the composition of operator families is well-defined in
\begin{equation}\label{RegularizingIdeal}
\begin{aligned}
\Psi^{\mu_1,d_1}(\Lambda)\times\Psi^{-\infty,d_2}(\Lambda) &\to \Psi^{-\infty,d}(\Lambda), \\
\Psi^{-\infty,d_1}(\Lambda)\times\Psi^{\mu_2,d_2}(\Lambda) &\to
\Psi^{-\infty,d}(\Lambda).
\end{aligned}
\end{equation}
The first of these statements follows immediately from the definition of
the calculus and Theorem~\ref{ExtensiontoSobolevSpaces}.
To prove the second, let $\varphi_0, \psi_0 \in C^{\infty}(Y_0\times[0,\eps))$ be
compactly supported, and let $\varphi_0 \equiv 1$ near $Y_0$, and $\psi_0 \equiv 1$ in a
neighborhood of the support of $\varphi_0$. Let
$$
\phi_{\textup{int}} = 1 - \sum\limits_{Y_0 \subset Y}\wp^*\varphi_0,
$$
and let $\psi_{\textup{int}} \in C^{\infty}(\Mbar)$ be compactly supported away from
$\partial\Mbar$ with $\psi_{\textup{int}} \equiv 1$ in a neighborhood of the support of
$\varphi_{\textup{int}}$.
Let $A(\lambda) \in \Psi^{\mu_2,d_2}(\Lambda)$. We write
\begin{equation}\label{Adecomp}
A(\lambda) = \sum\limits_{Y_0 \subset Y}\wp^*\varphi_0 A(\lambda) \wp^*\psi_0 +
\varphi_{\textup{int}} A(\lambda) \psi_{\textup{int}} + R(\lambda)
\end{equation}
with $R(\lambda) \in \Psi^{-\infty,d_2}(\Lambda)$.
Thus it remains to treat the composition of $G(\lambda) \in \Psi^{-\infty,d_1}(\Lambda)$
with every summand in \eqref{Adecomp}. Write
$$
G(\lambda) = \sum\limits_{j=0}^{d_1} G_j(\lambda)\begin{pmatrix} \partial_+ & 0 \\ 0 & 0
\end{pmatrix}^j
\textup{ with } G_j(\lambda) \in \Psi^{-\infty,0}(\Lambda).
$$
From the composition theorem in Boutet de Monvel's calculus on $Y_0\times[0,\eps)$
we obtain that
$$
\begin{pmatrix} \U & 0 \\ 0 & 1 \end{pmatrix}
\begin{pmatrix} \partial_+ & 0 \\ 0 & 0 \end{pmatrix}^j
\begin{pmatrix} \U^{-1} & 0 \\ 0 & 1 \end{pmatrix} \circ
\begin{pmatrix} \U & 0 \\ 0 & 1 \end{pmatrix}
\wp^*\varphi_0 A(\lambda) \wp^*\psi_0
\begin{pmatrix} \U^{-1} & 0 \\ 0 & 1 \end{pmatrix}
$$
belongs to Boutet de Monvel's calculus on $Y_0\times[0,\eps)$.
Since $G_j(\lambda)$ is an integral operator with $C^{\infty}$-kernel that
depends rapidly decreasing on $\lambda \in \Lambda$, we thus conclude that
$$
G_j(\lambda)\begin{pmatrix} \partial_+ & 0 \\ 0 & 0 \end{pmatrix}^j\wp^*\varphi_0 
A(\lambda) \wp^*\psi_0 \in \Psi^{-\infty,d}(\Lambda).
$$
The latter conclusion utilizes the mapping properties of $\U$, and the standard
mapping properties of operators in Boutet de Monvel's calculus. It is clear that
the compositions
$G(\lambda)\varphi_{\textup{int}} A(\lambda) \psi_{\textup{int}}$
and $G(\lambda)R(\lambda)$ belong to $\Psi^{-\infty,d}(\Lambda)$.
This finishes the proof of \eqref{RegularizingIdeal}.

Now consider the general case. Let $\tilde{\psi}_0 \in C^{\infty}(Y_0\times[0,\eps))$
be compactly supported with $\tilde{\psi}_0 \equiv 1$ in a neighborhood of the
support of $\psi_0$. Write
\begin{align*}
\wp^*\varphi_0 A_1(\lambda)A_2(\lambda) \wp^*\psi_0 =
\bigl(\wp^*\varphi_0 &A_1(\lambda)\wp^*\psi_0\bigr)
\bigl(\wp^*\tilde{\psi_0}A_2(\lambda)\wp^*\psi_0\bigr) \\
&+ \wp^*\varphi_0 A_1(\lambda)(1-\wp^*\tilde{\psi}_0)A_2(\lambda)\wp^*\psi_0.
\end{align*}
The operator $(1-\wp^*\tilde{\psi}_0)A_2(\lambda)\wp^*\psi_0 \in
\Psi^{-\infty,d_2}(\Lambda)$, and thus
$$
\wp^*\varphi_0 A_1(\lambda)(1-\wp^*\tilde{\psi}_0)A_2(\lambda)\wp^*\psi_0 \in
\Psi^{-\infty,d}(\Lambda)
$$
by \eqref{RegularizingIdeal}. Utilizing the mapping $\U$ and the
composition theorem in Boutet de Monvel's calculus on $Y_0\times[0,\eps)$, we get
$$
\bigl(\wp^*\varphi_0 A_1(\lambda)\wp^*\psi_0\bigr)
\bigl(\wp^*\tilde{\psi_0}A_2(\lambda)\wp^*\psi_0\bigr) \in
\Psi^{\mu_1+\mu_2,d}(\Lambda).
$$
Hence $\wp^*\varphi_0 A_1(\lambda)A_2(\lambda) \wp^*\psi_0 \in
\Psi^{\mu_1+\mu_2,d}(\Lambda)$, and
\begin{align*}
\sym(\wp^*\varphi_0 A_1(\lambda)A_2(\lambda) \wp^*\psi_0) & =
\wp^*\varphi_0 \sym(A_1)\sym(A_2), \\
\sym_{Y_0}(\wp^*\varphi_0 A_1(\lambda)A_2(\lambda) \wp^*\psi_0) &=
\sym_{Y_0}(A_1A_2) = \sym_{Y_0}(A_1)\sym_{Y_0}(A_2).
\end{align*}
One comment about this argument is in order. Decompose the $A_j(\lambda)$ according to
\eqref{FullAlgebraOperator} into pseudodifferential and singular Green
parts. The pseudodifferential parts multiply by the composition theorem in
Boutet de Monvel's calculus on $\Mbar$. Hence we have the pseudodifferential parts of
$\wp^*\varphi_0 A_1(\lambda)A_2(\lambda) \wp^*\psi_0$ under control, we don't catch 
contributions that are not pseudolocal on $\Mbar$ by pulling back pseudodifferential 
operators from $Y_0\times[0,\eps)$ via $\U$.

Let $\hat{\psi_0} \in C^{\infty}(Y_0\times[0,\eps))$ be compactly supported such
that $\hat{\psi_0} \equiv 1$ in a neighborhood of the support of $\varphi_0$, and
such that $\psi_0 \equiv 1$ in a neighborhood of the support of $\hat{\psi}_0$.
Then
\begin{align*}
\wp^*\varphi_0 A_1(\lambda)A_2(\lambda)(1 - \wp^*\psi_0) =
\bigl(\wp^*\varphi_0 &A_1(\lambda) (1-\wp^*\hat{\psi}_0)\bigr)
A_2(\lambda)(1 - \wp^*\psi_0) \\
&+ \wp^*\varphi_0 A_1(\lambda) \bigl(\wp^*\hat{\psi}_0A_2(\lambda)(1 - \wp^*\psi_0)\bigr).
\end{align*}
Now
$$
\bigl(\wp^*\varphi_0 A_1(\lambda) (1-\wp^*\hat{\psi}_0)\bigr) \in
\Psi^{-\infty,d_1}(\Lambda) \textup{ and }
\bigl(\wp^*\hat{\psi}_0A_2(\lambda)(1 - \wp^*\psi_0)\bigr) \in
\Psi^{-\infty,d_2}(\Lambda),
$$
and thus
$$
\wp^*\varphi_0 A_1(\lambda)A_2(\lambda)(1 - \wp^*\psi_0) \in \Psi^{-\infty,d}(\Lambda)
$$
by \eqref{RegularizingIdeal}.
A similar argument shows that
$$
\varphi_{\textup{int}} A_1(\lambda)A_2(\lambda) \psi_{\textup{int}} \in
\Psi^{\mu_1+\mu_2,d}(\Lambda)
$$
with
$$
\sym(\varphi_{\textup{int}} A_1(\lambda)A_2(\lambda) \psi_{\textup{int}}) =
\varphi_{\textup{int}} \sym(A_1)\sym(A_2),
$$
and likewise
$$
\varphi_{\textup{int}} A_1(\lambda) A_2(\lambda) (1 - \psi_{\textup{int}}) \in
\Psi^{-\infty,d}(\Lambda).
$$
Now write
$$
A_1(\lambda)A_2(\lambda) = \sum\limits_{Y_0\subset Y}
\wp^*\varphi_0 A_1(\lambda)A_2(\lambda) \wp^*\psi_0 +
\varphi_{\textup{int}} A_1(\lambda)A_2(\lambda) \psi_{\textup{int}} + R(\lambda).
$$
According to the arguments given above we conclude that $R(\lambda) \in
\Psi^{-\infty,d}(\Lambda)$, and we see that every summand in this representation belongs to
$\Psi^{\mu_1+\mu_2,d}(\Lambda)$. Consequently,
$A_1(\lambda)A_2(\lambda) \in \Psi^{\mu_1+\mu_2,d}(\Lambda)$, and the asserted
identities for the principal symbols follow from the corresponding identities
obtained above for the summands. This finishes the proof of the theorem.
\end{proof}

\begin{theorem}\label{ParametrixTheorem}
Let $A(\lambda) \in \Psi^{\mu,d}(\Lambda)$ be parameter-dependent elliptic in the
sense of Definition~\ref{EllipticityCalculus}. Then there exists a parameter-dependent
parametrix $P(\lambda) \in \Psi^{-\mu,(d-\mu)_+}(\Lambda)$ of $A(\lambda)$,
where $(d-\mu)_+ = \max\{d-\mu,0\}$, i.e., we have
$$
P(\lambda)A(\lambda) - 1 \in \Psi^{-\infty,*}(\Lambda), \textup{ and }
A(\lambda)P(\lambda) - 1 \in \Psi^{-\infty,*}(\Lambda).
$$
The types of these regularizing remainders are given by the type formula from
Theorem~\ref{CompositionTheorem}.
\end{theorem}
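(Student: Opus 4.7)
The plan is to construct $P(\lambda)$ by the standard symbol inversion plus formal Neumann series scheme, reducing everything near $\partial\Mbar$ to the classical Boutet de Monvel parametrix construction via the push-forward isomorphism $\U$. Parameter-dependent ellipticity (Definition~\ref{EllipticityCalculus}) gives us exactly what we need: the homogeneous principal symbol $\sym(A)(z,\zeta;\lambda)$ is invertible on $(T^*\Mbar\times\Lambda)\setminus\{0\}$, and on each component $Y_0 \subset Y$ the principal boundary symbol $\sym_{Y_0}(A)(y,\eta;\lambda)$ is invertible on $(T^*Y_0\times\Lambda)\setminus\{0\}$. The inverse of the homogeneous principal symbol is an anisotropic classical symbol of order $-\mu$ with the transmission property (since $A_0$ has the transmission property and ellipticity is preserved under inversion), and the inverse of the boundary symbol is an operator-valued symbol of order $-\mu$ and type $(d-\mu)_+$ in the sense of Remark~\ref{Boundarysymbolstructure}, by the standard Boutet de Monvel inversion of boundary symbols.

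To produce an honest operator, I would first work locally. Near each component $Y_0$, choose cut-offs $\varphi_0,\psi_0$ as in the proof of Theorem~\ref{ExtensiontoSobolevSpaces} and transport the localized family via $\U$ to an operator in the strongly polyhomogeneous anisotropic parameter-dependent Boutet de Monvel calculus on $Y_0\times[0,\eps)$, which is elliptic there by hypothesis. Applying the classical Boutet de Monvel parametrix theorem on $Y_0\times[0,\eps)$ yields a parametrix of order $-\mu$ and type $(d-\mu)_+$; pulling back by $\U^{-1}$ and inserting cut-offs on either side produces a candidate $P_{Y_0}(\lambda)$ belonging (by the definition of $\Psi_G^{-\mu,(d-\mu)_+}$ plus the transmission-property pseudodifferential part) to $\Psi^{-\mu,(d-\mu)_+}(\Lambda)$. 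In the interior region (away from $\partial\Mbar$), $A(\lambda)$ reduces to a standard parameter-dependent classical pseudodifferential operator; the usual symbol-inversion gives an interior parametrix $P_{\textup{int}}(\lambda)$. I then assemble a global first approximation
\[
P_0(\lambda) = \sum_{Y_0\subset Y}\wp^*\psi_0\, P_{Y_0}(\lambda)\,\wp^*\varphi_0 \;+\; \psi_{\textup{int}}P_{\textup{int}}(\lambda)\varphi_{\textup{int}},
\]
with cut-offs arranged as in the proof of Theorem~\ref{CompositionTheorem} so that off-diagonal terms land in $\Psi^{-\infty,*}(\Lambda)$.

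By the composition theorem, $R_1(\lambda) := 1 - P_0(\lambda)A(\lambda) \in \Psi^{-1,d'}(\Lambda)$ for some finite type $d'$, because its principal symbol and principal $Y_0$-symbols vanish by construction. I then sharpen $P_0$ by a formal Neumann iteration: a standard asymptotic summation within the calculus, which exists because $\Psi^{\mu,d}(\Lambda)$ is closed under asymptotic sums over decreasing order (with type stabilizing under composition by $R_1$, since $\max\{d_1-1,d_1\}=d_1$), yields $\widetilde{R}(\lambda) \sim \sum_{j\geq 1} R_1(\lambda)^j$ in $\Psi^{-1,d'}(\Lambda)$, and then $P(\lambda) = (1+\widetilde{R}(\lambda))P_0(\lambda)$ is a left parametrix modulo $\Psi^{-\infty,*}(\Lambda)$; its order and type are $-\mu$ and $(d-\mu)_+$ by the type formula of Theorem~\ref{CompositionTheorem}. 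A symmetric construction gives a right parametrix, and the two agree modulo $\Psi^{-\infty,*}(\Lambda)$ by the usual trick of multiplying left and right parametrices together. The main technical obstacle is the careful bookkeeping of types: verifying that the asymptotic summation can be performed inside $\Psi_G^{\mu,d}(\Lambda)$ (rather than just in Boutet de Monvel's $\B_G^{\mu,d}$) and that the sum together with the error-improvement iteration does not force the type above $(d-\mu)_+$. This ultimately rests on the type formula $d=\max\{d_1+\mu_2,d_2\}$ from Theorem~\ref{CompositionTheorem} and on the fact that the localization/push-forward procedure used to define $\Psi_G^{\mu,d}(\Lambda)$ is compatible with the standard Boutet de Monvel asymptotic summation on each $Y_0\times[0,\eps)$.
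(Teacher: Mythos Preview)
Your construction is correct and follows the same local strategy as the paper---transport to $Y_0\times[0,\eps)$ via $\U$, invoke the Boutet de Monvel parametrix there, pull back, and patch together with an interior parametrix. The difference is that the paper stops after patching: it takes \emph{full} local parametrices $\P_{Y_0}(\lambda)$ (inverses modulo smoothing in Boutet de Monvel's calculus on $Y_0\times[0,\eps)$) and a full interior parametrix $r_+P_0(\lambda)e_+$, glues them with the same cut-offs $\varphi_0,\psi_0,\varphi_{\textup{int}},\psi_{\textup{int}}$, and observes that this is already a parametrix modulo $\Psi^{-\infty,*}(\Lambda)$---no global Neumann iteration is needed. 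The reason is pseudolocality: since $\psi_0\equiv 1$ near $\supp\varphi_0$, the commutator $[\psi_0,\A]$ is supported away from $\supp\varphi_0$, so $\varphi_0\P_{Y_0}[\psi_0,\A]$ is smoothing; what remains is $\varphi_0\P_{Y_0}\A\psi_0=\varphi_0(1+S_{Y_0})\psi_0=\varphi_0+\text{smoothing}$, and summing over the partition of unity gives the identity plus a smoothing remainder. In your write-up you only claim $R_1(\lambda)\in\Psi^{-1,d'}(\Lambda)$ and then run a Neumann series plus asymptotic summation; this works, but it is redundant (and it imports an extra ingredient---asymptotic summation in $\Psi_G^{\mu,d}(\Lambda)$---that the paper's more direct argument does not need). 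If you keep your version, the type bookkeeping you outline is correct; if you want the shortest proof, simply note that the patched operator already has $PA-1\in\Psi^{-\infty,*}(\Lambda)$ by the pseudolocality built into Definition~\ref{SingularGreen}.
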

\begin{proof}
Consider the restrictions of $A(\lambda)$ to $\wp^{-1}(Y_0\times[0,\eps))$, i.e.,
the operators
$$
A(\lambda)|_{\wp^{-1}(Y_0\times[0,\eps))} : \begin{array}{c} 
C_c^{\infty}(\wp^{-1}(Y_0\times[0,\eps)),E) \\ \oplus \\ C^{\infty}(Y_0,J_{0,-}) 
\end{array} \to
\begin{array}{c} C^{\infty}(\wp^{-1}(Y_0\times[0,\eps)),E) \\ \oplus \\
C^{\infty}(Y_0,J_{0,+}) \end{array}.
$$
By construction of the calculus and by assumption, we obtain that
$$
\begin{pmatrix} \U & 0 \\ 0 & 1 \end{pmatrix}
A(\lambda)|_{\wp^{-1}(Y_0\times[0,\eps))}
\begin{pmatrix} \U^{-1} & 0 \\ 0 & 1 \end{pmatrix}
$$
is a parameter-dependent elliptic boundary value problem in Boutet de Monvel's calculus
on $Y_0\times[0,\eps)$. Let $\P_{Y_0}(\lambda)$ be a parameter-dependent parametrix
of this operator, and consider
$$
\wp^*\varphi_0 \begin{pmatrix} \U^{-1} & 0 \\ 0 & 1 \end{pmatrix} \P_{Y_0}(\lambda)
\begin{pmatrix} \U & 0 \\ 0 & 1 \end{pmatrix} \wp^*\psi_0
$$
with the functions $\varphi_0,\psi_0$ from the proof of Theorem~\ref{CompositionTheorem}.
This operator belongs to $\Psi^{-\mu,(d-\mu)_+}(\Lambda)$.

Let $r_+P_0(\lambda)e_+$ be a parameter-dependent parametrix of the (interior)
pseudodifferential part $r_+A_0(\lambda)e_+$ of $A(\lambda)$ according to
\eqref{FullAlgebraOperator}, and let
$$
P(\lambda) = \sum\limits_{Y_0 \subset Y}
\wp^*\varphi_0 \begin{pmatrix} \U^{-1} & 0 \\ 0 & 1 \end{pmatrix} \P_{Y_0}(\lambda)
\begin{pmatrix} \U & 0 \\ 0 & 1 \end{pmatrix} \wp^*\psi_0 + \varphi_{\textup{int}}
\begin{pmatrix} r_+P_0(\lambda)e_+ & 0 \\ 0 & 0 \end{pmatrix}\psi_{\textup{int}},
$$
where $\varphi_{\textup{int}}$ and $\psi_{\textup{int}}$ are as in the proof of
Theorem~\ref{CompositionTheorem}.
By construction, $P(\lambda) \in \Psi^{-\mu,(d-\mu)_+}(\Lambda)$ is then
a parameter-dependent parametrix of $A(\lambda)$ as desired.
\end{proof}


\section{The expansion of the resolvent}
\label{sec-ParamAsymptotics}

\noindent
This last section is devoted to the proof of the main result
Theorem~\ref{ResolventExpansion-Intro}. We break it up into two parts,
Theorem~\ref{ResolventExists} and Theorem~\ref{ResolventExpansion} below.
Having the calculus from Section~\ref{sec-PseudoClass} at hand, we are able to
argue parallel to the classical arguments for closed manifolds and differential boundary
value problems, see \cite{GilkeyIndexTheory,GrubbBuch,GrubbWeaklyPoly}.
In what follows, we use the notation and conventions from Section~\ref{sec-Setup}.

Let
$$
C_0 = \gamma_{Y_0}\begin{pmatrix} B_{0,1} \\ \vdots \\ B_{0,M_0}
\end{pmatrix}\U \circ r_{U_0} : C^{\infty}(\Mbar,E) \to
C^{\infty}\Bigl(Y_0,\bigoplus_{j=1}^{M_0}F_{0,j}|_{Y_0\times\{0\}}\Bigr)
$$
be the coupling condition for $A$ associated with $Y_0$ from
Definition~\ref{BoundaryContactProblem}. Recall that the operators $B_{0,j}$ have
orders $m_{0,j} < m$. For each $j$, choose a family
$$
R_j(\lambda) : C^{\infty}(Y_0,F_{0,j}|_{Y_0\times\{0\}}) \to
C^{\infty}(Y_0,F_{0,j}|_{Y_0\times\{0\}})
$$
of order $m-(m_{0,j}-1/2)$ in the calculus of anisotropic parameter-dependent 
pseudodifferential operators on $Y_0$ that is invertible with inverse
$R_j(\lambda)^{-1}$ being of order $m_{0,j}-1/2-m$ in that calculus. Throughout
this section, the anisotropy is fixed to be $\ell = m = \ord(A)$.

Let
$$
J_{0,+} = \bigoplus_{j=1}^{M_0}F_{0,j}|_{Y_0\times\{0\}},
$$
and let
$$
T_0(\lambda) = \begin{pmatrix} R_1(\lambda) &  & 0 \\  & \ddots &  \\ 0 &  & R_{M_0}(\lambda)
\end{pmatrix} C_0 : C^{\infty}(\Mbar,E) \to C^{\infty}(Y_0,J_{0,+}).
$$
Let $T(\lambda)$ be the direct sum of the operators $T_0(\lambda)$. Then
\begin{equation}\label{OperatorInCalculus}
A(\lambda) = \begin{pmatrix} A - \lambda \\ T(\lambda) \end{pmatrix} :
C^{\infty}(\Mbar,E) \to \begin{array}{c} C^{\infty}(\Mbar,E) \\ \oplus \\
\bigoplus\limits_{Y_0 \subset Y}C^{\infty}(Y_0,J_{0,+}) \end{array}
\end{equation}
belongs to the operator class $\Psi^{m,m}(\Lambda)$ constructed in
Section~\ref{sec-PseudoClass}.
The following lemma is immediate.

\begin{lemma}\label{EllipticityCompare}
The boundary contact problem $(A,C)$ is elliptic with parameter in $\Lambda$ in the
sense of Definition~\ref{ElliptBoundaryContactProblem} if and only if the operator
family $A(\lambda) \in \Psi^{m,m}(\Lambda)$ from \eqref{OperatorInCalculus} is
parameter-dependent elliptic in the sense of Definition~\ref{EllipticityCalculus}.
\end{lemma}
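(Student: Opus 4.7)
The plan is to show that the two principal symbols that enter Definition~\ref{EllipticityCalculus} for the operator family $A(\lambda)$ in \eqref{OperatorInCalculus} are, modulo invertible factors, precisely the data that appear in Definition~\ref{ElliptBoundaryContactProblem}. Since parameter-dependent ellipticity on both sides is the requirement that these symbols be invertible on the appropriate punctured cotangent bundles, the equivalence will then be immediate.

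First I would identify the interior homogeneous principal symbol. The operator $T(\lambda)$ is a boundary operator and so contributes nothing to $\sym(A)(z,\zeta;\lambda)$; in the decomposition \eqref{FullAlgebraOperator} only the upper left entry $A-\lambda$ is interior, so $\sym(A)(z,\zeta;\lambda) = \sym(A)(z,\zeta) - \lambda \cdot \textup{Id}_{\pi^*E}$. Invertibility of this symbol on $(T^*\Mbar\times\Lambda)\setminus\{0\}$ is exactly the statement that $A$ is elliptic with parameter in $\Lambda$, matching the first clause of Definition~\ref{ElliptBoundaryContactProblem}.

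Next I would compute the principal $Y_0$-symbol. By the very definition of the calculus, $\sym_{Y_0}(A)(y,\eta;\lambda)$ is obtained by conjugating with $\U$ and reading off the principal boundary symbol in Boutet de Monvel's calculus on $Y_0\times[0,\eps)$. Since $\U A \U^{-1} = \A$, the upper component contributes $\sym(\A)(y,0,\eta,D_x) - \lambda$ acting on $\S(\overline{\R}_+)\otimes\pi^*\wp_!E|_{Y_0}$. For the lower component, the composition $T_0(\lambda) = \textup{diag}(R_1(\lambda),\ldots,R_{M_0}(\lambda)) \cdot C_0$ has boundary symbol
\[
\sym_{Y_0}(T_0)(y,\eta;\lambda) = \textup{diag}\bigl(\sigma(R_j)(y,\eta;\lambda)\bigr) \cdot \gamma_{x=0}\,\sym(\B_0)(y,0,\eta,D_x),
\]
because the trace map $\gamma_{x=0}$ commutes with the pseudodifferential multipliers $R_j(\lambda)$ that act tangentially on $Y_0$. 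Thus
\[
\sym_{Y_0}(A)(y,\eta;\lambda) =
\begin{pmatrix} \textup{Id} & 0 \\ 0 & \textup{diag}(\sigma(R_j)) \end{pmatrix}
\begin{pmatrix} \sym(\A)(y,0,\eta,D_x) - \lambda \\ \gamma_{x=0}\sym(\B_0)(y,0,\eta,D_x) \end{pmatrix}.
\]

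Finally, the crucial point is that each $R_j(\lambda)$ was chosen to be invertible in the parameter-dependent calculus on $Y_0$, so its principal symbol $\sigma(R_j)(y,\eta;\lambda)$ is pointwise invertible on $(T^*Y_0\times\Lambda)\setminus\{0\}$. Consequently the diagonal factor on the right is an isomorphism of $\pi^*J_{0,+}$, and invertibility of $\sym_{Y_0}(A)(y,\eta;\lambda)$ is equivalent to invertibility of the boundary symbol column vector that appears in Definition~\ref{ElliptBoundaryContactProblem}. Combining this with the interior discussion completes the equivalence. The only step requiring any care is keeping track of the conjugation by $\U$ when reading off the boundary symbol, which is routine because $\U$ is diagonal along the sheets of $\wp$ and therefore does not mix symbols across components.
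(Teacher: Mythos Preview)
Your argument is correct: the key observation is that the principal $Y_0$-symbol of $A(\lambda)$ factors as an invertible diagonal block $\textup{diag}(\sigma(R_j))$ times the boundary symbol column from Definition~\ref{ElliptBoundaryContactProblem}, while the interior symbol is simply $\sym(A)-\lambda$. The paper itself declares the lemma ``immediate'' and gives no proof, so your proposal just spells out the symbol computation underlying that claim.
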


\begin{theorem}\label{ResolventExists}
Let $(A,C)$ be elliptic with parameter in $\Lambda$. 
\begin{enumerate}[a)]
\item The operator
$$
\begin{pmatrix} A - \lambda \\ C \end{pmatrix} : H^s(\Mbar,E) \to
\begin{array}{c} H^{s-m}(\Mbar,E) \\ \oplus \\
\bigoplus_{Y_0 \subset Y}\bigoplus_{j=1}^{M_0}
H^{s-m_{0,j}-1/2}(Y_0,F_{0,j}|_{Y_0\times\{0\}}) \end{array}
$$
is invertible for all $s > m-1/2$ and all $\lambda \in \Lambda$ with $|\lambda|$
sufficiently large.

Consequently, the unbounded operator $A_C$ in $L^2(\Mbar,E)$ that acts like $A$ and
has domain
$$
\Dom(A_C) = \{u \in H^{m}(\Mbar,E) \st Cu = 0\}
$$
is closed and densely defined, and for large $\lambda \in \Lambda$ the
resolvent $(A_C - \lambda)^{-1}$ exists.
\item There exists $Q(\lambda) \in \Psi^{-m,0}(\lambda)$ such that
$(A_C - \lambda)^{-1} = Q(\lambda)$ for large $\lambda \in \Lambda$. In particular,
by Theorem~\ref{ExtensiontoSobolevSpaces}, the resolvent satisfies the norm estimate
$$
\|(A_C - \lambda)^{-1}\|_{\L(L^2(\Mbar,E))} = O(|\lambda|^{-1})
$$
as $|\lambda| \to \infty$.
\end{enumerate}
\end{theorem}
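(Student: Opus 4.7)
The plan is to transport the boundary contact problem into the calculus of Section~\ref{sec-PseudoClass} and invert the operator family $A(\lambda)$ from \eqref{OperatorInCalculus} with the help of the parametrix theorem and a Neumann series argument; the resolvent of $A_C$ will then be read off as the first column of the inverse.

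First, by Lemma~\ref{EllipticityCompare}, $A(\lambda)\in\Psi^{m,m}(\Lambda)$ is parameter-dependent elliptic, so Theorem~\ref{ParametrixTheorem} supplies a parametrix $P(\lambda)\in\Psi^{-m,0}(\Lambda)$ with remainders
\[
R_\ell(\lambda) := P(\lambda)A(\lambda) - 1 \in \Psi^{-\infty,m}(\Lambda),\qquad
R_r(\lambda) := A(\lambda)P(\lambda) - 1 \in \Psi^{-\infty,0}(\Lambda),
\]
the types being forced by Theorem~\ref{CompositionTheorem}. By Theorem~\ref{ExtensiontoSobolevSpaces}(b) together with the definition of $\Psi^{-\infty,*}(\Lambda)$ as a class of rapidly decreasing families, both remainders have operator norms that decay faster than any power of $|\lambda|^{-1}$ on the appropriate Sobolev spaces ($L^2$ for $R_r$, $H^m$ for $R_\ell$). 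For $|\lambda|$ large, the Neumann series for $(1+R_r(\lambda))^{-1}=1+S_r(\lambda)$ converges and stays in $\Psi^{-\infty,0}(\Lambda)$ by the ideal and Fr\'echet-algebra properties of that class; the symmetric argument on the left produces a left inverse, so $Q(\lambda):=P(\lambda)+P(\lambda)S_r(\lambda)\in\Psi^{-m,0}(\Lambda)$ is the genuine two-sided inverse of $A(\lambda)$.

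Next I would extract $(A_C-\lambda)^{-1}$ from $Q(\lambda)$. For $f\in L^2(\Mbar,E)$, set $u:=Q(\lambda)\binom{f}{0}$; then $u\in H^m(\Mbar,E)$, $(A-\lambda)u=f$, and $T(\lambda)u=0$. Since each pseudodifferential factor $R_j(\lambda)$ used in the construction of $T(\lambda)$ is invertible by design, $T(\lambda)u=0$ is equivalent to $Cu=0$, placing $u$ in $\Dom(A_C)$. The identity $(A_C-\lambda)u=f$ together with injectivity of $A(\lambda)$ then identifies $(A_C-\lambda)^{-1}$ with the restriction of $Q(\lambda)$ to the first summand of its domain. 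Invertibility of $\binom{A-\lambda}{C}$ on $H^s$ for $s>m-1/2$ follows at once by composing with the invertible boundary diagonal $\operatorname{diag}(R_j(\lambda))^{-1}$. The norm estimate $\|(A_C-\lambda)^{-1}\|_{L^2\to L^2}=O(|\lambda|^{-1})$ is Theorem~\ref{ExtensiontoSobolevSpaces}(b) applied with $\mu=-m$ and $\ell=m$; density of $\Dom(A_C)$ follows from the inclusion $C_c^\infty(\open{\Mbar},E)\subset\Dom(A_C)$; and closedness of $A_C$ is automatic from the existence of a bounded resolvent.

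The main technical obstacle is the step that promotes a parametrix modulo $\Psi^{-\infty,*}(\Lambda)$ to a genuine inverse lying inside the calculus. What makes this work is that $\Psi^{-\infty,*}(\Lambda)$ is a two-sided ideal of smoothing families depending rapidly decreasing on $\lambda$: by the type formula in Theorem~\ref{CompositionTheorem}, compositions of elements of $\Psi^{-\infty,d}(\Lambda)$ with themselves remain in $\Psi^{-\infty,d}(\Lambda)$ and are again rapidly decreasing, so the geometric series for $(1+R_r(\lambda))^{-1}-1$ converges in the natural Fr\'echet topology of $\Psi^{-\infty,0}(\Lambda)$ once $|\lambda|$ is large enough. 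Once this closure property is granted, everything else reduces to bookkeeping for block components and for the equivalence between $T(\lambda)u=0$ and $Cu=0$.
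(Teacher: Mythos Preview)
Your proof is correct and follows essentially the same route as the paper: pass to $A(\lambda)$ via Lemma~\ref{EllipticityCompare}, get a parametrix from Theorem~\ref{ParametrixTheorem}, invert the regularizing remainders for large $\lambda$, and read off $(A_C-\lambda)^{-1}$ as the first column of $A(\lambda)^{-1}$. The only difference worth noting is that you invoke convergence of the Neumann series in the Fr\'echet topology of $\Psi^{-\infty,0}(\Lambda)$, whereas the paper uses the algebraic identity $(1+R)^{-1} = 1 - R + R\,\chi(\lambda)\,(1+R)^{-1}R$, which sandwiches the a priori merely bounded inverse between two smoothing operators and thus lands the correction in $\Psi^{-\infty,*}(\Lambda)$ without any Fr\'echet-convergence argument.
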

\begin{proof}
The operator $A(\lambda)$ in \eqref{OperatorInCalculus} is parameter-dependent
elliptic. Thus, by Theorem~\ref{ParametrixTheorem}, there exists a parametrix
$P(\lambda) \in \Psi^{-m,0}(\Lambda)$ of $A(\lambda)$. Consequently,
$$
A(\lambda)P(\lambda) - 1 = R_1(\lambda) \in \Psi^{-\infty,0}(\Lambda) \textup{ and }
P(\lambda)A(\lambda) - 1 = R_2(\lambda)\in \Psi^{-\infty,m}(\Lambda).
$$
For large $\lambda \in \Lambda$, the operators $1 + R_1(\lambda)$ and $1 + R_2(\lambda)$
are invertible as bounded operators in the Sobolev spaces. Write
$$
\bigl(1 + R_j(\lambda)\bigr)^{-1} = 1 - R_j(\lambda) + R_j(\lambda)
\chi(\lambda)\bigl(1 + R_j(\lambda)\bigr)^{-1}
R_j(\lambda)
$$
for large $\lambda$, where $\chi$ is a suitable excision function of the origin.
From the definition of the class of regularizing operators we obtain that
\begin{align*}
R_1(\lambda)
\chi(\lambda)\bigl(1 + R_1(\lambda)\bigr)^{-1}
R_1(\lambda) &\in \Psi^{-\infty,0}(\Lambda), \\
\intertext{and}
R_2(\lambda)
\chi(\lambda)\bigl(1 + R_2(\lambda)\bigr)^{-1}
R_2(\lambda) &\in \Psi^{-\infty,m}(\Lambda).
\end{align*}
Thus $\bigl(1 + R_j(\lambda)\bigr)^{-1} = 1 + \tilde{R}_j(\lambda)$ with regularizing
operators $\tilde{R}_j(\lambda)$. Consequently,
$$
P(\lambda)(1 + \tilde{R}_1(\lambda)) = \begin{pmatrix} Q(\lambda) & K(\lambda) \end{pmatrix}
\in \Psi^{-m,0}(\Lambda)
$$
inverts the operator $A(\lambda)$ for large $\lambda \in \Lambda$. Both a) and b) thus
follow in view of
$$
\Dom(A_C) = \{u \in H^{m}(\Mbar,E) \st T(\lambda)u = 0\}.
$$
\end{proof}

\begin{theorem}\label{ResolventExpansion}
Let $(A,C)$ be elliptic with parameter in the sector $\Lambda$, and let
$B \in \Diff^{k}(\Mbar,E)$.
For $N  > \frac{\dim\Mbar + k}{m}$ and large $\lambda \in \Lambda$ the operator
$$
B(A_C - \lambda)^{-N} : L^2(\Mbar,E) \to L^2(\Mbar,E)
$$
is of trace class, and the trace has an asymptotic expansion
$$
\Tr \bigl(B(A_C - \lambda)^{-N}\bigr) \sim |\lambda|^{-N}\sum\limits_{j=0}^{\infty}
c_j(\hat{\lambda})|\lambda|^{\frac{\dim\Mbar + k - j}{m}} \textup{ as }
|\lambda| \to \infty,
$$
where $c_j = c_j(B,N,A,C) \in C^{\infty}({\mathbb S}^1\cap\Lambda)$, and
$\hat{\lambda} = \lambda/|\lambda|$.
\end{theorem}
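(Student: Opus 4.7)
The plan is to realize $B(A_C-\lambda)^{-N}$ as an operator in the calculus of Section~\ref{sec-PseudoClass} of sufficiently negative order, and then to extract the trace expansion from the polyhomogeneous symbol structure. First, by Theorem~\ref{ResolventExists}(b), $(A_C-\lambda)^{-1} = Q(\lambda)$ for some $Q(\lambda) \in \Psi^{-m,0}(\Lambda)$ (with $J_{0,\pm}=0$, so $Q(\lambda)$ acts in the $L^2(\Mbar,E)$-block); iterating Theorem~\ref{CompositionTheorem} gives $Q(\lambda)^N \in \Psi^{-Nm,0}(\Lambda)$. The differential operator $B \in \Diff^k(\Mbar,E)$ extends to a differential operator on the double $2\Mbar$ and, having the transmission property automatically, is naturally a $\lambda$-independent element of $\Psi^{k,k}(\Lambda)$. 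Applying Theorem~\ref{CompositionTheorem} once more yields $BQ(\lambda)^N \in \Psi^{k-Nm,d}(\Lambda)$ with $d=\max\{k-Nm,0\}$; for $N>(\dim\Mbar+k)/m$ one has $k-Nm<-\dim\Mbar$ and in particular $d=0$.

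Next I would establish the following trace expansion lemma for the calculus itself: for any $K(\lambda)\in\Psi^{\mu,0}(\Lambda)$ with $J_{0,\pm}=0$ and $\mu<-\dim\Mbar$, the operator $K(\lambda)$ is trace class on $L^2(\Mbar,E)$ and
\[
\Tr K(\lambda) \sim \sum_{j=0}^{\infty}\gamma_j(\hat\lambda)|\lambda|^{(\dim\Mbar+\mu-j)/m}
\]
with $\gamma_j\in C^{\infty}(\mathbb{S}^1\cap\Lambda)$. To prove this I would use the cutoff decomposition from the proof of Theorem~\ref{CompositionTheorem} to split $K(\lambda)=\sum_{Y_0\subset Y}\wp^*\varphi_0 K(\lambda)\wp^*\psi_0+\varphi_{\textup{int}}K(\lambda)\psi_{\textup{int}}+R(\lambda)$ with regularizing remainder $R(\lambda)\in\Psi^{-\infty,0}(\Lambda)$ (rapidly decaying in $\lambda$, so contributing trivially to every order). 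The interior piece is, after extension to the double $2\Mbar$, a parameter-dependent classical pseudodifferential operator of order $\mu$, and the standard closed-manifold trace expansion applies: integrate the full symbol on the diagonal and use the anisotropic homogeneity $a_{(\mu-j)}(z,\varrho\zeta;\varrho^m\lambda)=\varrho^{\mu-j}a_{(\mu-j)}(z,\zeta;\lambda)$ to identify each contribution as of order $(\dim\Mbar+\mu-j)/m$ in $|\lambda|$ with a smooth angular coefficient in $\hat\lambda$ (cf.\ \cite{GrubbBuch,GrubbWeaklyPoly}). For each boundary-supported piece, the canonical isometry $\U$ from \eqref{CanMap} identifies it with an operator in Boutet de Monvel's calculus on $Y_0\times[0,\eps)$, whose $L^2$-trace splits into the diagonal integral of the interior pseudodifferential symbol and the $T^*Y_0$-integral of the fibrewise $L^2(\overline{\R}_+)$-trace of the singular-Green boundary symbol. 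By Remark~\ref{Boundarysymbolstructure} the latter trace is an ordinary anisotropic parameter-dependent classical symbol of order $\mu$, and since $\dim\Mbar=\dim Y_0+1$ both contributions again produce terms of the form $|\lambda|^{(\dim\Mbar+\mu-j)/m}$ under symbol integration.

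The theorem then follows by applying the lemma with $K(\lambda)=BQ(\lambda)^N$ and $\mu=k-Nm$ and factoring out $|\lambda|^{-N}$. The main technical obstacle will be the boundary trace computation: one must carefully extract the $L^2(\Mbar,E)$-diagonal trace from a matrix operator in the full calculus (keeping only the upper-left $(E,E)$-block), use the unitarity of $\U$ to reduce to $Y_0\times[0,\eps)$, and combine the interior-symbol contribution with the singular-Green boundary-symbol contribution inside Boutet de Monvel's calculus. Once that trace lemma is in hand, the polyhomogeneous structures described in Remarks~\ref{Boundarysymbolstructure} and~\ref{OrdinarySymbols} directly yield the asserted asymptotic expansion with smooth dependence on $\hat\lambda$.
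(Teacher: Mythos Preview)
Your proposal is correct and follows essentially the same approach as the paper: realize $B(A_C-\lambda)^{-N}$ in the calculus $\Psi^{k-Nm,0}(\Lambda)$ via Theorem~\ref{ResolventExists} and Theorem~\ref{CompositionTheorem}, then prove a general trace expansion for any $P(\lambda)\in\Psi^{\mu,0}(\Lambda)$ with $\mu<-\dim\Mbar$ by localization and the anisotropic homogeneity of the symbols. The only cosmetic difference is the order of the splitting---the paper first decomposes $P(\lambda)=r_+P_0(\lambda)e_+ + G(\lambda)$ into pseudodifferential and singular Green parts and then localizes each, whereas you first localize into interior and boundary pieces and only afterwards split the boundary contribution inside Boutet de Monvel's calculus on $Y_0\times[0,\eps)$; both lead to the same symbol integrals and the same homogeneity argument.
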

\begin{proof}
By Theorem~\ref{ResolventExists}, we have $(A_C - \lambda)^{-1} = Q(\lambda)$ for
large $\lambda \in \Lambda$ with $Q(\lambda) \in \Psi^{-m,0}(\Lambda)$. Because
$B \in \Psi^{k,0}(\lambda)$, we obtain from the composition theorem
(Theorem~\ref{CompositionTheorem}) that
$B(A_C - \lambda)^{-N} = BQ(\lambda)^{N} \in \Psi^{-Nm+k,0}(\Lambda)$.
Since the embedding $H^s(\Mbar,E) \hookrightarrow L^2(\Mbar,E)$ is nuclear for
$s > \dim\Mbar$, we get from Theorem~\ref{ExtensiontoSobolevSpaces} that
$$
B(A_C - \lambda)^{-N} : L^2(\Mbar,E) \to H^{Nm-k}(\Mbar,E) \hookrightarrow L^2(\Mbar,E)
$$
is of trace class as an operator in $L^2(\Mbar,E)$ provided that $Nm-k > \dim\Mbar$.
This proves the first assertion of the theorem.

In order to show the expansion, it suffices to show that for any
$P(\lambda) \in \Psi^{\mu,0}(\Lambda)$, where $\mu < -\dim\Mbar$, the $L^2$-trace
$\Tr P(\lambda)$ has an asymptotic expansion
$$
\Tr P(\lambda) \sim \sum\limits_{j=0}^{\infty}
c_j(\hat{\lambda})|\lambda|^{\frac{\dim\Mbar + \mu - j}{m}} \textup{ as }
|\lambda| \to \infty.
$$
To do this, we decompose $P(\lambda) = r_+P_0(\lambda)e_+ + G(\lambda)$ in a
pseudodifferential and singular Green operator according to \eqref{FullAlgebraOperator},
and consider the terms separately. The expansion of the trace of the pseudodifferential
part $\Tr(r_+P_0(\lambda)e_+)$ follows like in the case of a closed manifold.
For the benefit of the reader, we briefly sketch the argument:

Choose a partition of unity $\varphi_1,\ldots,\varphi_M$ subordinate to a finite
covering of $\Mbar$ by coordinate neighborhoods such that, in addition, $E$ is trivial
over each of these neighborhoods. Choose functions $\psi_j$ supported in the respective
coordinate neighborhoods such that $\psi_j \equiv 1$ in a
neighborhood of the support of $\varphi_j$. Write the operator
$P_0^+(\lambda) = r_+P_0(\lambda)e_+$ as
$$
P_0^+(\lambda) = \sum\limits_{j=0}^{M}\varphi_j P_0^+(\lambda)\psi_j + R(\lambda),
$$
where $R(\lambda) \in \Psi^{-\infty,0}(\Lambda)$. Because $\Tr R(\lambda) \sim 0$,
the expansion of $\Tr P_0^+(\lambda)$ reduces to expanding
$\Tr\bigl(\varphi_j P_0^+(\lambda)\psi_j\bigr)$ for each $j$. This converts to a
problem in coordinates. In coordinates, the $L^2$-trace is given by the
integral of the trace of the Schwartz kernel over the diagonal. Thus, if $p(z,\zeta;\lambda)$
is a local symbol, the trace is given by
$$
(2\pi)^{-n}\iint \tr p(z,\zeta;\lambda)\,dz\,d\zeta \sim
\sum\limits_{j=0}^{\infty}(2\pi)^{-n}\iint \tr p_{(\mu-j)}(z,\zeta;\lambda)\,dz\,d\zeta
\textup{ as } |\lambda| \to \infty,
$$
where $p_{(\mu-j)}(z,\zeta;\lambda)$ is the anisotropic homogeneous component of
degree $\mu-j$ of $p$, $n = \dim\Mbar$, and $\tr$ denotes the fibrewise trace. Recall that
$$
p_{(\mu-j)}(z,\varrho\zeta;\varrho^m\lambda) = \varrho^{\mu-j}p_{(\mu-j)}(z,\zeta;\lambda)
$$
for $\varrho > 0$ and $(\zeta,\lambda) \neq (0,0)$. Thus
\begin{align*}
(2\pi)^{-n}\iint \tr p_{(\mu-j)}&(z,\zeta;\lambda)\,dz\,d\zeta \\
&= \Bigl((2\pi)^{-n}\iint\tr p_{(\mu-j)}(z,|\lambda|^{-1/m}\zeta;\hat{\lambda})\,dz\,d\zeta\Bigr)
\cdot |\lambda|^{(\mu-j)/m} \\
&= \Bigl((2\pi)^{-n}\iint\tr p_{(\mu-j)}(z,\zeta;\hat{\lambda})\,dz\,d\zeta\Bigr) \cdot
|\lambda|^{(n+\mu-j)/m}.
\end{align*}
This shows that the desired asymptotic expansion holds for $\Tr P_0^+(\lambda)$.

Now consider the trace $\Tr G(\lambda)$. We shall work with the functions $\varphi_0$ and
$\psi_0$ from the proof of Theorem~\ref{CompositionTheorem} (these are not to be
confused with the $\varphi_j$'s and $\psi_j$'s above). Write
$$
G(\lambda) = \sum\limits_{Y_0 \subset Y}\varphi_0 G(\lambda) \psi_0 + \tilde{G}(\lambda)
$$
with $\tilde{G}(\lambda) \in \Psi^{-\infty,0}(\Lambda)$. In view of
$\Tr\tilde{G}(\lambda) \sim 0$, we only need to expand the trace
$\Tr\bigl(\varphi_0 G(\lambda) \psi_0\bigr)$ for each $Y_0$. From the trace property we
get
$$
\Tr\bigl(\varphi_0 G(\lambda) \psi_0\bigr) = \Tr\bigl(\U\circ
\bigl(\varphi_0 G(\lambda) \psi_0\bigr)\circ\U^{-1}\bigr)
$$
with the canonical map $\U$ from \eqref{CanMap}. The operator
$G_{Y_0}(\lambda) = \U\circ\bigl(\varphi_0 G(\lambda) \psi_0\bigr)\circ\U^{-1}$ is
an anisotropic parameter-dependent singular Green operator of order $\mu$ and type zero in
Boutet de Monvel's calculus on $Y_0\times[0,\eps)$ supported near the boundary. Hence
we know that
$$
\Tr G_{Y_0}(\lambda) \sim \sum\limits_{j=0}^{\infty}d_j(\hat{\lambda})
|\lambda|^{\frac{(n-1)+\mu-j}{m}} \textup{ as } |\lambda| \to \infty.
$$
For the benefit of the reader, let us stress that the latter expansion is
proved following the same scheme as the expansion of the pseudodifferential part above:

A similar localization argument as the one given above reduces the task of expanding
$\Tr G_{Y_0}(\lambda)$ to expanding the trace in coordinates near the boundary.
The boundary symbols $g(y,\eta;\lambda)$ of $G_{Y_0}(\lambda)$ have the structure
explained in Remark~\ref{Boundarysymbolstructure}. In coordinates, the $L^2$-trace
is given by
$$
(2\pi)^{-(n-1)}\iint \tr g(y,\eta;\lambda)\,dy\,d\eta,
$$
where $\tr$ denotes the trace on the space $L^2(\overline{\R}_+)
\otimes\C^{\dim\wp_!E|_{Y_0}}$. As noted in Remark~\ref{Boundarysymbolstructure},
$\tr g(y,\eta;\lambda)$ is an ordinary parameter-dependent symbol of order $\mu$.
The homogeneous components are the $\tr g_{(\mu-j)}(y,\eta;\lambda)$, where, for each
$j$, $g_{(\mu-j)}(y,\eta;\lambda)$ is the (twisted) homogeneous component of degree
$\mu-j$ associated with $g(y,\eta;\lambda)$.
Thus the same argument as above implies the expansion of the trace as desired.
\end{proof}


\end{document}